\documentclass[11pt]{article}

\usepackage[utf8]{inputenc}
\usepackage{amsmath, amssymb, amsfonts, mathrsfs, bbm, bbold, array, booktabs, multirow}
\usepackage{graphicx}
\usepackage{caption}
\usepackage{geometry}
\usepackage{hyperref}
\usepackage{times}
\usepackage{float}
\usepackage{xcolor}
\usepackage{ragged2e}
\usepackage{csquotes}
\usepackage{longtable}
\usepackage{appendix}
\usepackage{amsthm}

\usepackage{amsthm}

\newtheorem{definition}{Definition}[section]
\newtheorem{lemma}[definition]{Lemma}
\newtheorem{proposition}[definition]{Proposition}
\newtheorem{theorem}[definition]{Theorem}
\newtheorem{corollary}[definition]{Corollary}
\newtheorem{remark}[definition]{Remark}

\title{On the Arc Length of a Supercircle and a Hypergeometric Formulation of $\pi$}

\author{
    Yomber Montilla\textsuperscript{1},
    R. Omar Rodriguez\textsuperscript{2},
    Brexys Linares\textsuperscript{3} \\
    \small \textsuperscript{1} Universidad T\'ecnica Estatal de Quevedo, Facultad de Ciencias de la Ingenier\'ia, Quevedo 120550, Ecuador \\
    \small \textsuperscript{2} Decanato de Ciencias y Tecnolog\'ia, Universidad Centroccidental Lisandro Alvarado, Barquisimeto 3001, Venezuela \\
    \small \textsuperscript{3} Universidad T\'ecnica de Manab\'i, Facultad de Posgrado, Portoviejo 130105, Ecuador \\
    \small Corresponding authors: \texttt{ymontillal@uteq.edu.ec}; \texttt{rafaelorodriguez@ucla.edu.ve}
}

\begin{document}
\date{}
\maketitle

\begin{abstract}
We obtain an infinite-series representation for the arc length of a supercircle in terms of the scale parameter $a$ and the shape parameter $n$. The resulting expression is constructed by means of generalized binomial coefficients and Gauss hypergeometric functions, distinguishing two regimes associated with the value of $n$. We also analyze the absolute convergence of the resulting series. We verify the consistency of the formulation from limiting cases and particular configurations of the family of supercircles: when $n\to0^+$ and $n\to\infty$, the length converges to the value $8a$, corresponding to the limiting rectilinear geometries, whereas for $n=1$ we recover the perimeter of the rhombus with diagonals of length $2a$. In addition, as a validation against supercircles with exact arc length, the formulation reproduces with high numerical precision the arc length of the parabolic star, the astroid, and the circle. Finally, by specializing the circular case $n=2$ and normalizing the length by the diameter $2a$, we obtain a series representation, in terms of hypergeometric functions, for the constant $\pi$.

\noindent\textbf{Keywords:} Arc length, supercircle, hypergeometric function, constant $\pi$
\end{abstract}

\section{Introduction} 
Supercircles, also known as Lamé circles, form a family of closed plane curves that, by varying a shape parameter, describe a wide variety of geometries: concave four-cusped shapes, rhombi, circles, and near-square configurations. This geometric versatility has favored their use in describing natural forms, such as the geometry of certain plant structures~\cite{R-1,R-2,R-3,R-4}. In technology, they appear in the geometry of LCD screen pixels and in the corresponding far-field diffraction patterns~\cite{R-5}. Likewise, in architecture and design, they are used in ornamental objects and civil infrastructures, such as squares and stadiums~\cite{R-6}. 

Several works in physics and engineering have addressed the computation and characterization of basic geometric properties associated with the supercircle. Bera et al. (2008) present a compact formula, in terms of gamma functions, for the area enclosed by a Lam\'e circle, and they apply it to the computation of the ground-state energy of a particle confined in a box with supercircular geometry~\cite{R-7}. Likewise, Isoj\"arvi (2021) extends this study to the three-dimensional case by considering the dynamics of a particle trapped in a potential well with the geometry of a Lam\'e sphere of constant volume. For this purpose, he uses a parameterized formula for the $n$-dimensional volume of superspheres~\cite{R-8}. Matsuura (2014), in turn, determines the curvature and the asymptotic behavior of the maximum curvature of Lamé curves in order to solve the optimal design problem posed by Piet Hein for Sergel Square in Stockholm~\cite{R-9}. These studies highlight the importance of considering the geometric properties of supercircles, since many models are based on rectangles and circles, whereas supercircles make it possible to approximate certain real geometries more appropriately. Regarding arc length, Erbaş (2022) points out that, although the computation of the area of supercircles is analytically accessible, there is a notable gap concerning closed formulas for their arc length. In this context, he proposes a perimeter expression evaluated by numerical integration for the curve under consideration, and applies it to boundary-value problems in rectangular domains modeled by the Laplace equation~\cite{R-10}. 

For some particular cases of the family of supercircles, such as the astroid, the rhombus, the circle, and the limiting square case, the arc length admits closed expressions. However, for other curves of interest in engineering, such as the \textit{squircle}, the value of the shape parameter leads to integrals without a closed-form solution, which makes it difficult to obtain exact expressions for their arc length. This motivates the present work, which aims to derive a compact expression for the arc length of a supercircle. The geometric richness of this family of curves makes it a suitable object for studying how the arc length varies under deformations controlled by the shape parameter. 

The central contribution of this work is to obtain a formulation for the arc length of a supercircle, depending on the shape parameter $n$ and the scale parameter $a$, by means of a series of Gauss hypergeometric functions weighted by generalized binomial coefficients. Due to the axial and diagonal symmetries of the curve, the total length is reduced to the study of the arc corresponding to the angular interval $\theta \in [0,\pi/4]$. In this domain, the classical arc-length formula in polar coordinates, together with suitable changes of variables, allows the original integral to be transformed into a form in which generalized binomial coefficients and Gauss hypergeometric functions arise naturally. The resulting expression is determined by two regimes associated with the shape parameter $n$. As a consistency check, the analysis of the limits $n\to 0^+$ and $n\to\infty$ shows that the arc length converges at both endpoints to the value $8a$. In particular, as $n\to\infty$, the supercircle converges geometrically to the square of side $2a$, and the formulation reproduces its perimeter. Likewise, at the transition value $n=1$, the curve reduces to a rhombus and the calculation recovers its length $4\sqrt{2}\,a$, which constitutes the lower bound of the family. Finally, we obtain a hypergeometric representation of $\pi$ by specializing the formulation to the circular case, in agreement with the fact that series representations of this constant continue to appear in various contexts of mathematical analysis and mathematical physics, including recent formulations derived from amplitude expansions in string theory~\cite{R-11}. 

Since the formulation obtained for the arc length is expressed as an infinite series depending on $n$, its evaluation is carried out by successive truncations. The numerical analysis makes it possible to study the global behavior of $L(n)$ and to validate the expression in particular cases with exact lengths. The results show that the required truncation order is not uniform over the parameter domain, since it increases significantly near $n=1$ and decreases as one moves away from this value. In addition, the evaluation of classical cases---the parabolic star, the astroid, the rhombus, and the circle---confirms that the formulation reproduces the corresponding analytical lengths with high precision. Finally, for the hypergeometric representation of $\pi$, we observe that, when the prescribed tolerance is decreased by one order of magnitude, approximately one additional correct digit is obtained. 

The article is organized as follows. In Section 2 we derive the integral representation of the arc length of the supercircle and establish the regimes determined by the parameter $n$. In Section 3 we obtain the formulation in terms of hypergeometric functions and analyze the convergence of the corresponding series. We then study the rectilinear geometries of the family and the circular case, from which we derive a hypergeometric representation of $\pi$. Finally, Section 4 presents the numerical results that validate the formulation obtained, and the last section summarizes the main conclusions.

\section{Arc Length of a Supercircle}

\begin{definition}
Let $a>0$ and let $n>0$. The supercircle with scale parameter $a$ and shape parameter $n$ is defined in polar coordinates by~\cite{R-7}
\begin{equation}\label{Eq1}
    r(\theta) = a\left(|\cos\theta|^n + |\sin\theta|^n\right)^{-1/n}\ ,
\end{equation}
where $a$ is a scale parameter, whereas $n$ controls the degree of squareness.
\end{definition}

\begin{figure}[h]
\centering
\includegraphics[width=0.50\linewidth]{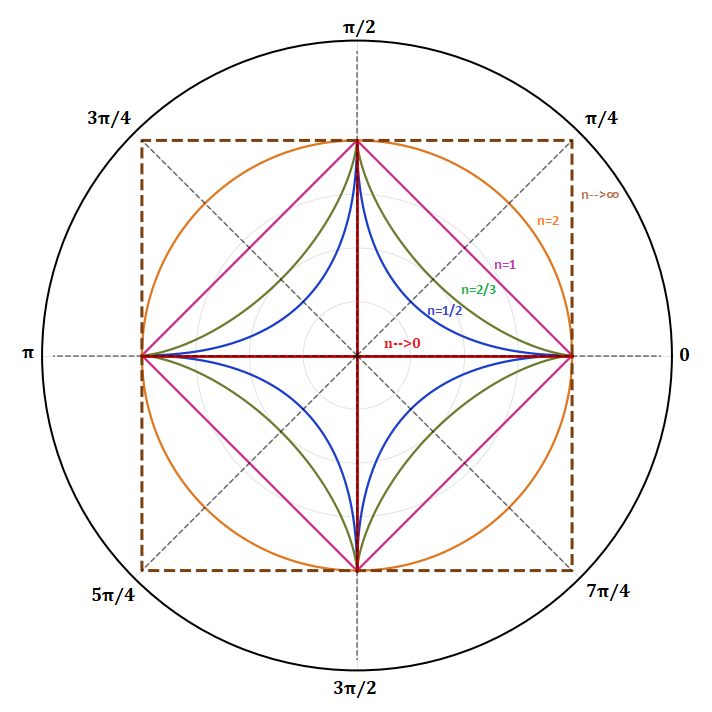}
\caption{Family of supercircles with $a=1$: $n\to 0$ (cross), $n=1/2$ (parabolic arcs),
$n=2/3$ (astroid), $n=1$ (rhombus), $n=2$ (circle), and $n\to\infty$
(square). The dashed lines illustrate the angular partition into
octants of width $\pi/4$.}
\label{F1}
\end{figure}

Fig.~\ref{F1} illustrates some particular cases of supercircles. The curve has axial and diagonal symmetry. By these symmetries, the total length can be obtained from the arc contained in the angular interval $\theta\in[0,\pi/4]$. In this domain, $0 \leq \cos\theta, \sin\theta \leq 1$, so Eq.~\eqref{Eq1} can be written without absolute values as
\begin{equation}\label{Eq2}
    r(\theta) = a\left(\cos^n\theta + \sin^n\theta\right)^{-1/n}, \quad \theta \in [0, \pi/4].
\end{equation}

\begin{lemma}
For $a>0$ and $n>0$, the arc length of the supercircle defined by Eq.~\eqref{Eq1} can be reduced to the interval $\theta\in[0,\pi/4]$ by
\begin{equation}\label{Eq3}
    L = 8\int_0^{\pi/4} \sqrt{r^2 + \left(\frac{dr}{d\theta}\right)^2}\, d\theta.
\end{equation}
\end{lemma}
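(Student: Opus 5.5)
The plan is to combine the standard polar arc-length formula with the eightfold symmetry of the curve. By Eq.~\eqref{Eq1}, $r$ depends on $\theta$ only through $|\cos\theta|$ and $|\sin\theta|$. Hence $r(-\theta)=r(\theta)$, $r(\pi-\theta)=r(\theta)$ and $r(\pi/2-\theta)=r(\theta)$. Geometrically these are the reflections across the $x$-axis, the $y$-axis and the diagonal $y=x$, since for instance $(r\cos(\pi/2-\theta),r\sin(\pi/2-\theta))=(r\sin\theta,r\cos\theta)$. These reflections are isometries of the plane, and they generate the dihedral group of order $8$. The images of the arc $\Gamma_0=\{(r(\theta)\cos\theta,r(\theta)\sin\theta):\theta\in[0,\pi/4]\}$ under this group are the arcs over the octants $[k\pi/4,(k+1)\pi/4]$, $k=0,\dots,7$. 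These eight arcs cover the curve and overlap only at endpoints, which have zero length. Therefore $L=8\,\mathrm{length}(\Gamma_0)$.

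To compute $\mathrm{length}(\Gamma_0)$, I would parametrize by $x=r\cos\theta$, $y=r\sin\theta$. Then $x'^2+y'^2=r^2+r'^2$, so the classical formula gives $\int_0^{\pi/4}\sqrt{r^2+r'^2}\,d\theta$. This requires $r$ to be $C^1$ on $[0,\pi/4]$, and in that interval we may use the form \eqref{Eq2}, $r=a\,g(\theta)^{-1/n}$ with $g=\cos^n\theta+\sin^n\theta$. Since $\cos\theta\ge 1/\sqrt2$, we have $g\ge 2^{-n/2}>0$, so the only regularity issue comes from $\sin^n\theta$ at $\theta=0$.

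The main obstacle is exactly this endpoint for $0<n<1$. There $g'(\theta)$ contains the term $n\sin^{n-1}\theta\cos\theta$, which blows up as $\theta\to0^+$. I would handle it by computing the length on $[\varepsilon,\pi/4]$, where $r$ is smooth, and then letting $\varepsilon\to0^+$. Arc length is continuous along a continuous rectifiable curve, so this limit gives the length of $\Gamma_0$. The integrand is nonnegative, so monotone convergence identifies this limit with the (improper) integral in \eqref{Eq3}.

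Finiteness of the integral, if needed, follows from the estimate $|r'|\le C\theta^{n-1}$ near $0$. The right-hand side is integrable for $n>0$. Alternatively, one can note that $\Gamma_0$ is the graph of a monotone function, which is automatically rectifiable. For $n\ge1$ the integrand is continuous on $[0,\pi/4]$, and no limiting argument is needed. Combining this with the symmetry count gives \eqref{Eq3}.
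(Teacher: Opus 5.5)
Your proof is correct and follows the same route as the paper: the curve is invariant under the dihedral symmetry group generated by reflections in the axes and the diagonal, so its length is eight times the length of the first-octant arc, and that arc's length is given by the classical polar arc-length formula. Your additional argument for $0<n<1$ is sound and fills in a detail the paper passes over when it says the formula ``leads directly'' to the result. There, $r'$ blows up at $\theta=0$, and you handle it by working on $[\varepsilon,\pi/4]$, using rectifiability and monotone convergence, and bounding $|r'|\le C\theta^{n-1}$.
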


\begin{proof}
The curve defined by Eq.~\eqref{Eq1} is invariant under reflections with respect to the coordinate axes and under interchange of the coordinate variables. In polar coordinates, these symmetries allow the complete curve to be decomposed into eight congruent arcs, each corresponding to an angular interval of width $\pi/4$. Therefore, the total length is obtained by multiplying the length of the arc in $\theta\in[0,\pi/4]$ by $8$. In this interval, the classical arc-length formula in polar coordinates leads directly to Eq.~\eqref{Eq3}.
\end{proof}

\begin{proposition}
The arc length of the supercircle admits the integral representation
\begin{equation}\label{Eq14}
    L = \frac{8a}{n}\int_0^{1} \xi^{-(n-1)/n}\left(1+\xi\right)^{-(1+1/n)}
    \left[1+\xi^{2(n-1)/n}\right]^{1/2}\;d\xi.
\end{equation}
\end{proposition}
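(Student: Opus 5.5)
The plan is to start from the reduced arc-length formula of Eq.~\eqref{Eq3} and apply Eq.~\eqref{Eq2} on $[0,\pi/4]$. Write $c=\cos\theta$, $s=\sin\theta$ and $D=c^n+s^n$, so that $r=aD^{-1/n}$. Differentiating gives
\begin{equation*}
\frac{dr}{d\theta}=-\frac{a}{n}D^{-1/n-1}\,n\left(-c^{n-1}s+s^{n-1}c\right)=aD^{-1/n-1}\left(c^{n-1}s-s^{n-1}c\right).
\end{equation*}
Factoring out $a^2D^{-2/n-2}$ then yields
\begin{equation*}
r^2+\left(\frac{dr}{d\theta}\right)^2=a^2D^{-2/n-2}\left[D^2+\left(c^{n-1}s-s^{n-1}c\right)^2\right].
\end{equation*}

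The key algebraic step, and the one I expect to be the only real obstacle, is simplifying the bracket. Expanding it, the cross terms $\pm 2c^ns^n$ cancel. The remaining four terms group as $c^{2n-2}(c^2+s^2)+s^{2n-2}(s^2+c^2)$, so the bracket reduces to $c^{2n-2}+s^{2n-2}$. The integrand of Eq.~\eqref{Eq3} therefore becomes
\begin{equation*}
aD^{-1-1/n}\left(c^{2n-2}+s^{2n-2}\right)^{1/2}.
\end{equation*}

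Next I substitute $t=\tan\theta$, which maps $[0,\pi/4]$ onto $[0,1]$ with $d\theta=c^2\,dt$. We have $D=c^n(1+t^n)$ and $(c^{2n-2}+s^{2n-2})^{1/2}=c^{n-1}(1+t^{2n-2})^{1/2}$. The powers of $c$ combine as $c^{-n-1}\cdot c^{n-1}\cdot c^{2}=1$, giving
\begin{equation*}
L=8a\int_0^1(1+t^n)^{-(1+1/n)}\left(1+t^{2n-2}\right)^{1/2}dt .
\end{equation*}
Finally I set $\xi=t^n$, so that $dt=\tfrac1n\xi^{1/n-1}d\xi=\tfrac1n\xi^{-(n-1)/n}d\xi$ and $t^{2n-2}=\xi^{2(n-1)/n}$. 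This produces exactly Eq.~\eqref{Eq14}.

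A brief remark handles regularity. For $0<n<1$, both $dr/d\theta$ at $\theta=0$ and the factors $\xi^{-(n-1)/n}$ and $\xi^{2(n-1)/n}$ at $\xi=0$ are singular. In that case the integrals are to be read as improper integrals over $(0,1]$, and the changes of variables are justified on $[\varepsilon,1]$ before letting $\varepsilon\to0^+$. Near $\xi=0$ the integrand of Eq.~\eqref{Eq14} behaves like $\xi^{-(n-1)/n}\cdot\xi^{(n-1)/n}=1$, which is integrable. For $n\ge1$ everything is continuous, so no limiting argument is needed.
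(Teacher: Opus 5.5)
Your computation is correct and is essentially the paper's proof. The paper likewise sets $z=\cos^n\theta+\sin^n\theta$ and cancels the cross terms $\pm 2n^2(\sin\theta\cos\theta)^n$, reducing the radicand to $n^2\bigl(\cos^{2(n-1)}\theta+\sin^{2(n-1)}\theta\bigr)$; it then factors out powers of $\cos\theta$ to produce $\sec^2\theta\,d\theta$ and substitutes $\xi=\tan^n\theta$ in one step, where you use $t=\tan\theta$ followed by $\xi=t^n$.

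One correction to your regularity remark: for $0<n<1$ the factor $\xi^{-(n-1)/n}=\xi^{(1-n)/n}$ tends to $0$ at $\xi=0$ (only $\xi^{2(n-1)/n}$ blows up, and the product stays bounded), whereas for $n>1$ the Jacobian factor $\xi^{-(n-1)/n}$ is unbounded. So the integral in \eqref{Eq14} is genuinely improper precisely when $n>1$, and your $\varepsilon\to0^+$ argument (or the comparison with $\sqrt{2}\,\xi^{-(n-1)/n}$ that the paper uses) is needed there too, contrary to your claim that no limiting argument is needed for $n\ge1$.
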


\begin{proof}
The direct calculation of the integral in Eq.~\eqref{Eq3} is not trivial. In particular, the power $-1/n$ appearing in Eq.~\eqref{Eq2} makes the algebraic manipulation of the quadratic terms under the radical difficult. For this reason, we introduce an auxiliary variable that allows the integrand to be expressed in a more analytically tractable form. By defining
\begin{equation}\label{Eq4}
    z(\theta) = \cos^n\theta + \sin^n\theta,
\end{equation}
we obtain
\begin{equation}\label{Eq5}
    r(\theta) = az^{-1/n}.
\end{equation}
The derivative of $r$ with respect to $\theta$, obtained by the chain rule, is given by
\begin{equation}\label{Eq6}
        \frac{dr}{d\theta} 
    = \frac{dr}{dz}\frac{dz}{d\theta} 
    = -\frac{a}{n}\,z^{-1/n}\,\frac{z'}{z}.
\end{equation}
Substituting Eqs.~\eqref{Eq5} and~\eqref{Eq6} into Eq.~\eqref{Eq3}, we obtain
\begin{equation}\label{Eq7}
    L = 8\int_0^{\pi/4} \left[a^2z^{-2/n} +\frac{a^2z^{-2/n}}{n^{2}} 
    \left(\frac{z^{\prime}}{z}\right)^2\right]^{1/2} d\theta
    = \frac{8a}{n}\int_0^{\pi/4}z^{-(1/n+1)} 
    \left[n^{2}z^{2} +(z^{\prime})^{2}\right]^{1/2} d\theta.
\end{equation}
Since the integrand is expressed in terms of $z$ and $z'$, the quadratic terms contained in the radical can be expanded and simplified without ambiguity in the integration limits.

Expanding the term $n^2z^2$ by using Eq.~\eqref{Eq4}, we obtain:
\begin{align}\label{Eq8}
    n^{2}z^2 &= n^{2}\cos^{2n}\theta + n^{2}\sin^{2n}\theta + 2n^{2}(\sin\theta\cos\theta)^n \nonumber\\
    &= n^{2}\cos^{2(n-1)}\theta\cos^{2}\theta + n^{2}\sin^{2(n-1)}\theta\sin^{2}\theta
    +2n^{2}(\sin\theta\cos\theta)^n.
\end{align}
In turn, the derivative of $z$ is given by
\begin{equation}\label{Eq9}
    z^{\prime} = n\sin^{n-1}\theta\cos\theta - n\cos^{n-1}\theta\sin\theta,
\end{equation}
and its square is
\begin{equation}\label{Eq10}
    (z^{\prime})^2 = n^{2}\sin^{2(n-1)}\theta\cos^{2}\theta 
    + n^{2}\cos^{2(n-1)}\theta\sin^{2}\theta
    - 2n^{2}(\sin\theta\cos\theta)^n.
\end{equation}
Adding Eqs.~\eqref{Eq8} and~\eqref{Eq10}, the cross terms $\pm 2n^{2}(\sin\theta\cos\theta)^{n}$ cancel, and, using the trigonometric identity $\cos^{2}\theta + \sin^{2}\theta = 1$, we obtain
\begin{equation}\label{Eq11}
    n^2z^2 + (z^{\prime})^2 =n^2\left[\cos^{2(n-1)}\theta + \sin^{2(n-1)}\theta\right].
\end{equation}
Substituting Eqs.~\eqref{Eq11} and~\eqref{Eq4} 
into Eq.~\eqref{Eq7}, and after factoring the powers of $\cos\theta$, the arc length is rewritten as
\begin{align}\label{Eq12}
    L=8a\int_0^{\pi/4} \left(1+\tan^n\theta\right)^{-(1+1/n)}
\left[1+\tan^{2(n-1)}\theta\right]^{1/2}\sec^2\theta\;d\theta.
\end{align}

Note that in Eq.~\eqref{Eq12} the factor $\sec^2\theta$ 
appears naturally together with $\tan^n\theta$, which suggests introducing the change of variable
\begin{equation}\label{Eq13}
    \xi = \tan^n\theta \implies \tan\theta = \xi^{1/n}.
\end{equation}
Since $\theta\in[0,\pi/4]$, this change transforms the integration interval into $\xi\in[0,1]$. Thus, the arc length is expressed as indicated in Eq.~\eqref{Eq14}.
\end{proof}

Eq.~\eqref{Eq14} can be sensitive to singularities depending on the value of $n$ when the integrand is evaluated in a neighborhood of $\xi\to 0^+$. This leads us to distinguish two regimes.

\begin{proposition}
The integral representation of the arc length can be written in the following two regimes:
\begin{equation}\label{Eq16}
        L = \frac{8a}{n}\int_0^1 
        \left(1+\xi\right)^{-(1+1/n)}
        \left(1+\xi^{2(1-n)/n}\right)^{1/2}\, d\xi,
        \qquad 0 < n \leq 1,
\end{equation}
and
 \begin{equation}\label{Eq20}
        L = \frac{8a}{n}\int_0^1 
        \xi^{-(n-1)/n}
        \left(1+\xi\right)^{-(1+1/n)}
        \left[1+\xi^{2(n-1)/n}\right]^{1/2} d\xi,
        \qquad n \geq 1.
    \end{equation}
\end{proposition}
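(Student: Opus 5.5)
The plan is to start from the integral representation in Eq.~\eqref{Eq14} and handle the two regimes by algebraic manipulation of the integrand alone. The case $n\geq 1$ needs no work: Eq.~\eqref{Eq20} is literally Eq.~\eqref{Eq14}. The point to record is that for $n\ge1$ the exponent $-(n-1)/n$ lies in $(-1,0]$, so the factor $\xi^{-(n-1)/n}$ is an integrable singularity at $\xi=0$. The exponent $2(n-1)/n\ge 0$ keeps the bracket bounded on $[0,1]$, so the integral converges absolutely and Eq.~\eqref{Eq20} is the natural form.

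For $0<n\le1$, the plan is to absorb the prefactor $\xi^{-(n-1)/n}=\xi^{(1-n)/n}$ into the square root. Since $\xi>0$ on $(0,1]$, we have
\[
\xi^{(1-n)/n}\bigl[1+\xi^{2(n-1)/n}\bigr]^{1/2}=\bigl[\xi^{2(1-n)/n}+\xi^{2(1-n)/n}\xi^{2(n-1)/n}\bigr]^{1/2}=\bigl[1+\xi^{2(1-n)/n}\bigr]^{1/2}.
\]
Here we use $\xi^{(1-n)/n}=(\xi^{2(1-n)/n})^{1/2}$, which is valid for positive $\xi$. Substituting this identity pointwise on $(0,1]$ into Eq.~\eqref{Eq14} gives Eq.~\eqref{Eq16}. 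The single point $\xi=0$ does not affect the Lebesgue (or improper Riemann) integral.

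The only real care needed concerns justification at the endpoint $\xi=0$: I would check that both integrals exist and coincide. For $0<n\le 1$ the new integrand is continuous and bounded on $[0,1]$, because $2(1-n)/n\ge0$. For $n<1$, the original integrand in Eq.~\eqref{Eq14} has the factor $\xi^{(1-n)/n}$, which vanishes, multiplied by a bracket with the negative exponent $2(n-1)/n$, which blows up. This is a $0\cdot\infty$ form. Rewriting the integrand as above shows that it is actually bounded, so the improper integral in Eq.~\eqref{Eq14} converges and equals the proper integral in Eq.~\eqref{Eq16}.

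At $n=1$ both expressions reduce to $8a\int_0^1(1+\xi)^{-2}\sqrt2\,d\xi$, so the two regimes agree on their overlap. The main (mild) obstacle is making this endpoint behaviour explicit. That is exactly what motivates the split: in each regime the chosen form has no problematic $0\cdot\infty$ structure, either none at all or only an integrable power singularity. This prepares the subsequent binomial and hypergeometric expansions.
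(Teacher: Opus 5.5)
Your proposal is correct and follows essentially the same route as the paper. For $0<n\le 1$ you absorb the prefactor $\xi^{(1-n)/n}$ into the radical, which is exactly the paper's identity \eqref{Eq15}. For $n\ge 1$ you keep \eqref{Eq14} unchanged and justify convergence through the integrable power singularity, which the paper makes explicit with the comparison bound $\sqrt{2}\,\xi^{-(n-1)/n}$.
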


\begin{proof}
We first consider the case $0<n<1$. In this interval, $2(n-1)/n<0$, so that $\xi^{2(n-1)/n}\to\infty$ as $\xi\to 0^+$. This singularity at the lower limit suggests rewriting the radical in Eq.~\eqref{Eq14} as
    \begin{equation}\label{Eq15}
        \left(1+\xi^{2(n-1)/n}\right)^{1/2}
        =
        \xi^{(n-1)/n}
        \left(1+\xi^{2(1-n)/n}\right)^{1/2}.
    \end{equation}
In this way, Eq.~\eqref{Eq14} reduces to the expression given in Eq.~\eqref{Eq16}, valid for $0<n\leq1$.

For $n=1$, the integral in Eq.~\eqref{Eq14} has no singularity at the lower endpoint, since the integrand reduces to $\sqrt{2}\,(1+\xi)^{-2}$. However, for $n>1$ the integral is improper at the lower endpoint, because the dominant singular factor $\xi^{-(n-1)/n}\to\infty$ as $\xi\to0^+$, whereas the remaining factors are bounded. To evaluate the convergence of the integral in Eq.~\eqref{Eq14}, we note that, for $0<\xi\leq1$ and $n>1$,
\begin{equation}\label{Eq17}
    0<(1+\xi)^{-(1+1/n)}\leq 1,
    \qquad
    1\leq \left(1+\xi^{2(n-1)/n}\right)^{1/2}\leq \sqrt{2}.
\end{equation}
Therefore, by combining the inequalities and multiplying by the positive factor $\xi^{-(n-1)/n}$,
\begin{equation}\label{Eq18}
    0\leq
    \xi^{-(n-1)/n}(1+\xi)^{-(1+1/n)}
    \left(1+\xi^{2(n-1)/n}\right)^{1/2}
    \leq
    \sqrt{2}\,\xi^{-(n-1)/n}.
\end{equation}
Since $-(n-1)/n>-1$ for all $n>1$, we have
\begin{equation}\label{Eq19}
     \sqrt{2}\int_0^1 \xi^{-(n-1)/n}\,d\xi = \sqrt{2} n.
\end{equation}
Consequently, by the direct comparison test, the integral in Eq.~\eqref{Eq14} converges for all $n>1$. Hence, for $n\geq1$ the arc length is expressed as indicated in Eq.~\eqref{Eq20}.
\end{proof}

\begin{remark}
For $n=1$, Eqs.~\eqref{Eq16} and~\eqref{Eq20} coincide. It is worth noting that, in general, these integrals do not admit representations in terms of elementary functions. Therefore, in the next section we develop their representation by means of special functions.
\end{remark}

\section{Arc Length in Terms of Hypergeometric Functions}

\begin{theorem}\label{T3.1}
Let $a>0$ and let $n>0$. The arc length of the supercircle defined in~\eqref{Eq1} is given by
\begin{equation}\label{Eq26}
    L = \frac{8a}{n}\sum_{m=0}^\infty \frac{1}{u_m}\binom{1/2}{m}
        \,{}_2F_1\!\left(1+\frac{1}{n},\,u_m;\,u_m+1;\,-1\right),
\end{equation}
where
\begin{equation}\label{Eq23}
u_m =
    \begin{cases}
        \displaystyle 1 + \frac{2m(1-n)}{n}, & 0 < n \leq 1,\\[10pt]
        \displaystyle \frac{1+2m(n-1)}{n},   & n \geq 1.
    \end{cases}
\end{equation}
\end{theorem}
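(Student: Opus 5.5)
Starting from the two-regime representation, Eqs.~\eqref{Eq16} and~\eqref{Eq20}, I would expand the square-root factor with the generalized binomial series and then integrate term by term against $(1+\xi)^{-(1+1/n)}$. For $0<n\le1$ set $\alpha=2(1-n)/n\ge0$; then $0\le\xi^{\alpha}\le1$ on $[0,1]$, so
\[
\left(1+\xi^{\alpha}\right)^{1/2}=\sum_{m=0}^\infty\binom{1/2}{m}\xi^{m\alpha}.
\]
For $n\ge1$ set $\beta=2(n-1)/n\ge0$; again $0\le\xi^\beta\le1$, and
\[
\xi^{-(n-1)/n}\left(1+\xi^{\beta}\right)^{1/2}=\sum_{m=0}^\infty\binom{1/2}{m}\xi^{m\beta-(n-1)/n}.
\]
In both cases the exponent of $\xi$ in the $m$-th term is $u_m-1$, with $u_m$ as in Eq.~\eqref{Eq23}. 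Indeed, $1+m\alpha-1=2m(1-n)/n$ in the first case, and $m\beta-(n-1)/n=\bigl(1+2m(n-1)\bigr)/n-1$ in the second. Note also that $u_m>0$ in both regimes: it equals $1+m\alpha\ge1$ in the first and is at least $1/n>0$ in the second.

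Next I would justify interchanging sum and integral. The binomial series $\sum_m\binom{1/2}{m}x^m$ converges absolutely at $x=1$, because $|\binom{1/2}{m}|\sim Cm^{-3/2}$. Hence on $[0,1]$ the series is dominated by
\[
\sum_m\left|\binom{1/2}{m}\right|\,\xi^{u_m-1}(1+\xi)^{-(1+1/n)}\le \xi^{u_0-1}\sum_m\left|\binom{1/2}{m}\right|.
\]
This bound uses $u_m\ge u_0$ and $\xi\le1$. The dominating function is integrable since $u_0>0$, with $u_0=1$ or $u_0=1/n$. So dominated convergence (or Tonelli applied to the absolute series) allows termwise integration, giving
\[
L=\frac{8a}{n}\sum_{m}\binom{1/2}{m}\int_0^1\xi^{u_m-1}(1+\xi)^{-(1+1/n)}\,d\xi.
\]

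Finally I would evaluate each integral via Euler's integral representation,
\[
{}_2F_1(a,b;c;z)=\frac{\Gamma(c)}{\Gamma(b)\Gamma(c-b)}\int_0^1t^{b-1}(1-t)^{c-b-1}(1-zt)^{-a}\,dt,
\]
valid for $\operatorname{Re}c>\operatorname{Re}b>0$. Take $b=u_m$, $c=u_m+1$, $a=1+1/n$ and $z=-1$. Then $(1-t)^{0}=1$, and the prefactor is $\Gamma(u_m+1)/\Gamma(u_m)=u_m$. The integral is therefore $\frac{1}{u_m}\,{}_2F_1(1+1/n,u_m;u_m+1;-1)$, and $z=-1$ lies in the cut plane, so the representation is legitimate there. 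Substituting this gives Eq.~\eqref{Eq26}.

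The main obstacle is the interchange step. There one must check that the expansion holds uniformly enough even at $\xi=1$, where the binomial series sits on its circle of convergence. It also matters near $\xi=0$ when $n>1$, because the factor $\xi^{-(n-1)/n}$ is unbounded. The absolute summability of $\binom{1/2}{m}$, together with the positivity of $u_0$, should settle both issues. As a side effect, this also gives absolute convergence of the series in Eq.~\eqref{Eq26}, since $0<\int_0^1\xi^{u_m-1}(1+\xi)^{-1-1/n}\,d\xi\le 1/u_m\le1/u_0$.
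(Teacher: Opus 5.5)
Your proposal is correct and follows essentially the paper's route: you expand the radical in Eqs.~\eqref{Eq16} and~\eqref{Eq20} by the binomial series, integrate termwise, and identify each integral through Euler's representation with $w=u_m+1$, $\gamma=-1$, $v=1+1/n$. You go beyond the paper in two places. The paper interchanges sum and integral without comment, whereas your domination by $\xi^{u_0-1}\sum_m\bigl|\binom{1/2}{m}\bigr|$ (using $u_m\ge u_0>0$) justifies that step. You also note, correctly, that ${}_2F_1(\cdot\,;-1)$ must be read through the Euler integral on the cut plane.
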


\begin{proof}
The radical factor appearing in Eqs.~\eqref{Eq16} and~\eqref{Eq20} has the form $(1+\xi^{\lambda})^{1/2}$, with $\lambda\geq 0$ in each of the regimes considered. Since $\alpha=1/2>0$ and $\xi^\lambda\in[0,1]$ on the integration interval, we have
\begin{equation}\label{Eq21}
(1+\xi^{\lambda})^{1/2}
=
\sum_{m=0}^\infty \binom{1/2}{m} \xi^{\lambda m}.
\end{equation}
Eq.~\eqref{Eq21} is the binomial series with parameter $\alpha=1/2$ \cite{R-13,R-14}; see Appendix~\ref{AA}. Therefore, Eqs.~\eqref{Eq16} and~\eqref{Eq20}, with $\lambda=2(1-n)/n$ for $0 < n \leq 1$ and $\lambda=2(n-1)/n$ for $ n \geq 1 $, respectively, can be written compactly as
\begin{equation}\label{Eq22}
    L= \frac{8a}{n}\sum_{m=0}^\infty \binom{1/2}{m}
        \int_0^1 \xi^{u_m-1}(1+\xi)^{-(1+1/n)}\,d\xi, 
\end{equation}
where $u_m$ is defined in Eq.~\eqref{Eq23}.
 
In Eq.~\eqref{Eq22}, we recognize Euler's integral representation for the Gauss hypergeometric function \cite{R-15}:
\begin{equation}\label{Eq24}
    {}_2F_1(v,u;w;\gamma)
    =
    \frac{\Gamma(w)}{\Gamma(u)\,\Gamma(w-u)}
    \int_0^1 
    \xi^{u-1}
    (1-\xi)^{w-u-1}
    (1-\gamma\xi)^{-v}\,d\xi.
\end{equation}
For this choice of parameters, the factor $(1-\xi)^{w-u-1}$ reduces to unity by setting $w=u+1$, whereas $(1-\gamma \xi)^{-v}$ coincides with the factor $(1+\xi)^{-(1+1/n)}$ by taking $\gamma=-1$ and $v=1+1/n$. Consequently,
\begin{equation}\label{Eq25}
    \frac{1}{u}\,{}_2F_1\!\left(1+\frac{1}{n},u;u+1;-1\right)
    =\int_0^1 
    \xi^{u-1}
    (1+\xi)^{-(1+1/n)}\,d\xi,
\end{equation}
since $\Gamma(1)=1$ and $\Gamma(u+1)=u\,\Gamma(u)$. Applying the identity in Eq.~\eqref{Eq25} to the integral appearing in Eq.~\eqref{Eq22}, we obtain precisely the hypergeometric representation of the arc length given in Eq.~\eqref{Eq26}. This compact expression unifies both regimes of the family through the parameter $u_m$, whose form distinguishes the case $0<n\leq 1$ from the case $n\geq 1$.
\end{proof}
 
\subsection{Convergence of the Series}

\begin{proposition}
The hypergeometric series defining the arc length in Eq.~\eqref{Eq26} converges absolutely for all $n>0$.
\end{proposition}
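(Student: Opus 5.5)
The plan is to bound each term of the series in Eq.~\eqref{Eq26} by an explicit, summable majorant, working through the integral form \eqref{Eq25} rather than through properties of ${}_2F_1$ at $\gamma=-1$. By Eq.~\eqref{Eq25}, the $m$-th term equals
\[
T_m=\frac{8a}{n}\binom{1/2}{m}I_m,\qquad I_m=\int_0^1 \xi^{u_m-1}(1+\xi)^{-(1+1/n)}\,d\xi .
\]
Here $u_m>0$ in both regimes: for $0<n\le1$ we have $u_m\ge1$, and for $n\ge1$ we have $u_m\ge 1/n>0$.

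Because $0<(1+\xi)^{-(1+1/n)}\le1$ on $[0,1]$, we get
\[
0<I_m\le \int_0^1 \xi^{u_m-1}\,d\xi = \frac{1}{u_m}.
\]
It follows that
\[
|T_m|\le \frac{8a}{n}\left|\binom{1/2}{m}\right|\frac{1}{u_m}.
\]

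The key step is an asymptotic estimate for the binomial coefficient. Write $\binom{1/2}{m}=\frac{(-1)^{m-1}}{2m-1}\cdot\frac{(2m)!}{4^m (m!)^2}$. Combining this with $\frac{(2m)!}{4^m(m!)^2}\le 1$ (or with Stirling's formula, which gives $\sim 1/\sqrt{\pi m}$) yields
\[
\left|\binom{1/2}{m}\right| = O\!\left(m^{-3/2}\right).
\]
This bound alone already makes $\sum_m |\binom{1/2}{m}|$ finite. Indeed, one can also see this directly: the binomial series for $(1+x)^{1/2}$ converges absolutely at $x=1$, since for $m\ge1$ its coefficients have alternating signs and their absolute values sum to $1-(1-1)^{1/2}=1$.

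Since $1/u_m$ is bounded uniformly in $m$, the terms $|T_m|$ are dominated by a summable sequence. For $0<n\le1$ the bound is $1/u_m\le1$. For $n>1$ the sequence $u_m$ is increasing, so $1/u_m\le 1/u_0=n$. In the case $n\ne1$ one even gains an extra factor, because $u_m$ grows linearly in $m$, but this is not needed. Hence
\[
\sum_m|T_m|\le \frac{8a}{n}\max(1,n)\sum_m\left|\binom{1/2}{m}\right|<\infty ,
\]
and the comparison test gives absolute convergence for every $n>0$.

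The only delicate point is the justification of the termwise integration already used in Eq.~\eqref{Eq22}, which this argument also settles. All terms $\binom{1/2}{m}\xi^{\lambda m}$ with $m\ge1$ have absolute values whose sum is bounded by $\sum_m|\binom{1/2}{m}|$ uniformly on $[0,1]$. The series therefore converges uniformly, so the interchange of sum and integral is legitimate by dominated convergence, with dominating function a constant times $\xi^{u_0-1}(1+\xi)^{-(1+1/n)}$, which is integrable. I expect the main care to lie in stating the bound on the binomial coefficient cleanly; the remaining steps are routine comparisons.
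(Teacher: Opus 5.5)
Your argument is correct. Its first step is the paper's: both of you bound the integral factor $I_m$ from Eq.~\eqref{Eq25} by $1/u_m$, using $0<(1+\xi)^{-(1+1/n)}\le 1$.

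After that the routes differ.
\begin{itemize}
\item The paper uses the linear growth $u_m\ge \frac{2|1-n|}{n}\,m$ to get $I_m=O(m^{-1})$ for $n\neq 1$. Combined with $|\binom{1/2}{m}|=O(m^{-3/2})$, this gives terms of order $m^{-5/2}$. It then treats $n=1$ separately: there $I_m=1/2$ and the terms are only $O(m^{-3/2})$.
\item You use only that $1/u_m\le\max(1,n)$ is bounded, together with $\sum_m|\binom{1/2}{m}|<\infty$. This is exactly the paper's $n=1$ argument applied to every $n$. It removes the case split and gives the explicit majorant $\sum_m|T_m|\le 16a\max(1,n)/n$, since $\sum_{m\ge0}|\binom{1/2}{m}|=2$.
\end{itemize}
What the paper's extra step buys is rate information. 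It later uses the contrast between $O(m^{-5/2})$ for $n\neq1$ and $O(m^{-3/2})$ at $n=1$ to explain the slow convergence near the rhombus; your proof says nothing about that. Your closing paragraph on dominated convergence is a genuine addition, since the paper performs the termwise integration leading to Eq.~\eqref{Eq22} without justification.

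There is one slip in how you justify the binomial estimate. The bound $\frac{(2m)!}{4^m(m!)^2}\le 1$ only gives $|\binom{1/2}{m}|\le\frac{1}{2m-1}$. That is $O(m^{-1})$, which is not summable, so this alternative does not yield $O(m^{-3/2})$. Instead use one of the following:
\begin{itemize}
\item Stirling's formula.
\item The elementary Wallis-type bound $\frac{(2m)!}{4^m(m!)^2}\le\frac{1}{\sqrt{2m+1}}$.
\item Your direct argument, stated as a limit rather than by substituting $x=1$, since substitution presupposes the convergence you are proving. For $0\le x<1$ one has $\sum_{m\ge1}|\binom{1/2}{m}|x^m=1-\sqrt{1-x}\le 1$. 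Letting $x\to1^-$ in each finite partial sum shows every partial sum of $\sum_{m\ge1}|\binom{1/2}{m}|$ is at most $1$.
\end{itemize}
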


\begin{proof}
Once we obtain the hypergeometric representation of the arc length in Eq.~\eqref{Eq26}, it is necessary to establish the convergence of the series that defines it. To this end, we first estimate the hypergeometric factor through its integral representation and then combine this bound with the asymptotic behavior of the generalized binomial coefficient. The analysis distinguishes the case $n\neq 1$, where the parameter $u_m$ grows linearly with $m$, from the particular case $n=1$, in which $u_m=1$ for all $m$.
 
Since $n>0$, we have $1+1/n>0$. Moreover, if $0<\xi\leq 1$, then $1<1+\xi\leq 2$. Therefore, $(1+\xi)^{1+1/n}\geq 1$ and, by taking reciprocals, we obtain a positive factor that does not exceed unity:
\begin{equation}\label{Eq27}
0 < (1 + \xi)^{-(1+1/n)} \leq 1.
\end{equation}
 
The positivity of $u_m$ implies that, for all $0<\xi\leq 1$, we have $\xi^{u_m-1}>0$. Thus, by multiplying the inequality~\eqref{Eq27} by the positive factor $\xi^{u_m-1}$, the direction of the inequality is preserved. Hence, we obtain
\begin{equation}\label{Eq28}
0 < \xi^{u_m-1}(1 + \xi)^{-(1+1/n)} \leq \xi^{u_m-1}.
\end{equation}
Integrating the inequality~\eqref{Eq28} over the interval $(0,1]$ and considering, when necessary, the improper integral at $\xi=0$, we obtain
\begin{equation}\label{Eq29}
0 < \int_0^1 \xi^{u_m-1}(1 + \xi)^{-(1+1/n)}\,d\xi
\leq \int_0^1 \xi^{u_m-1}\,d\xi.
\end{equation}
The condition $u_m>0$ guarantees that the integral on the right-hand side of~\eqref{Eq29} converges and is equal to $1/u_m$. Therefore,
\begin{equation}\label{Eq30}
0 < \int_0^1 \xi^{u_m-1}(1 + \xi)^{-(1+1/n)}\,d\xi \leq \frac{1}{u_m}.
\end{equation}
For $n>0$, $n\neq 1$, the parameter $u_m$ grows linearly in $m$. Indeed, $u_m$ can be written in the form
\begin{equation}\label{Eq31}
u_m = u_{0}(n) + \frac{2|1-n|}{n}\,m,
\qquad
u_0(n) =
\begin{cases}
1, & 0<n<1, \\[4pt]
\dfrac{1}{n}, & n>1.
\end{cases}
\end{equation}
Therefore, for all $m\geq 1$ and $n\neq 1$,
\begin{equation}\label{Eq32}
u_m \geq \frac{2|1-n|}{n}\,m.
\end{equation}
Since both sides of the inequality~\eqref{Eq32} are positive, taking reciprocals reverses the direction of the inequality. Consequently,
\begin{equation}\label{Eq33}
\frac{1}{u_m} \leq \frac{n}{2|1-n|}\frac{1}{m},
\qquad m\geq 1,\ n\neq 1.
\end{equation}
Combining the estimate~\eqref{Eq33} with the inequality~\eqref{Eq30}, we obtain
\begin{equation}\label{Eq34}
0 < \int_0^1 \xi^{u_m-1}(1+\xi)^{-(1+1/n)}\,d\xi
\leq \frac{n}{2|1-n|}\frac{1}{m},
\qquad m\geq 1,\ n\neq 1.
\end{equation}
Therefore, from the integral identity established in Eq.~\eqref{Eq25},
\begin{equation}\label{Eq35}
0 < \frac{1}{u_m}{}_2F_1\left(1+\frac{1}{n},u_m;u_m+1;-1\right)
\leq \frac{n}{2|1-n|}\frac{1}{m},
\qquad m\geq 1,\ n\neq 1.
\end{equation}
Consequently, the bound~\eqref{Eq35} implies that, for each fixed $n>0$, with $n\neq 1$ and for large values of $m$,
\begin{equation}\label{Eq36}
\frac{1}{u_m}{}_2F_1\left(1+\frac{1}{n},u_m;u_m+1;-1\right)
\sim O\left(m^{-1}\right). \qquad 
\end{equation}
On the other hand, the generalized binomial coefficient satisfies, as established in Appendix~\ref{AA}, the asymptotic behavior
\begin{equation}\label{Eq37}
\left|\binom{1/2}{m}\right| \sim O\left(m^{-3/2}\right).
\end{equation}
Combining this estimate with Eq.~\eqref{Eq36}, we obtain
\begin{equation}\label{Eq38}
\left|\frac{1}{u_m}\binom{1/2}{m} \, {}_2F_1\left(1+\frac{1}{n},\,u_m;\,u_m+1;\,-1\right)\right| \sim O\left(m^{-5/2}\right).
\end{equation}
Since $\displaystyle\sum_{m=1}^{\infty} m^{-5/2}=
\zeta(5/2)$ converges, the comparison test implies that the series~\eqref{Eq26} converges absolutely for each fixed $n>0$, with $n\neq 1$.
 
In the case $n=1$, we have $u_m=1$ and, according to Appendix~\ref{CC}, ${}_2F_1\!\left(2,1;2;-1\right)=1/2$. Therefore,
\begin{equation}\label{Eq39}
\left|\binom{1/2}{m}\,{}_2F_1\!\left(2,1;2;-1\right)\right|\sim O\!\left(m^{-3/2}\right).
\end{equation}
Since $\displaystyle\sum_{m=1}^{\infty} m^{-3/2}=\zeta(3/2)$ converges, the comparison test implies that the series~\eqref{Eq26} converges absolutely for $n=1$. Consequently, the series~\eqref{Eq26} converges absolutely for all $n>0$.
\end{proof}

\begin{remark}
The case $n=1$ represents the slowest convergence regime within the estimates obtained, since the general term is controlled by the asymptotic bound associated with the generalized binomial coefficient, of order $O(m^{-3/2})$, in contrast to the bound $O_n(m^{-5/2})$ obtained for $n\neq 1$. Moreover, since the integral factor defined in~\eqref{Eq25} is positive for all $m\geq 0$ and $n>0$, the sign of the general term is determined by $\binom{1/2}{m}$, so that the series has an alternating sign pattern from $m=1$ onward. The numerical results presented below are consistent with this convergence behavior.
\end{remark}

\subsection{Rectilinear Geometries of the Family}

\begin{proposition}
In the rectilinear cases of the family of supercircles, the following lengths are obtained:
\[
L_{\square}=8a,\qquad
L_{+}=8a,\qquad
L_{\diamond}=4\sqrt{2}\,a.
\]
\end{proposition}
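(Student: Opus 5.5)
The plan is to obtain each length from the hypergeometric series of Theorem~\ref{T3.1}, via the equivalent integral forms \eqref{Eq16}–\eqref{Eq20}, by taking the corresponding limit in $n$.

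\textbf{The rhombus ($n=1$).} This case is a direct evaluation. For $n=1$ we have $u_m=1$ for every $m$, and by Appendix~\ref{CC} each hypergeometric factor equals ${}_2F_1(2,1;2;-1)=1/2$. Hence
\[
L_\diamond = 8a\cdot\tfrac12\sum_{m\ge0}\binom{1/2}{m}.
\]
By the binomial series \eqref{Eq21} at $\xi=1$, which is convergent by Abel's theorem and the $O(m^{-3/2})$ decay of the coefficients, the sum equals $(1+1)^{1/2}=\sqrt2$. This gives $L_\diamond=4\sqrt2\,a$. As a cross-check, it agrees with $8\int_0^1\sqrt2(1+\xi)^{-2}\,d\xi=4\sqrt2$.

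\textbf{The square ($n\to\infty$).} I will pass to the limit in the series term by term. Here $u_m=(1+2m(n-1))/n\to 2m$ for $m\ge1$, while $u_0=1/n\to0$. The $m=0$ term is the delicate one:
\[
\frac{8a}{n}\cdot\frac{1}{u_0}\,{}_2F_1=8a\int_0^1\xi^{1/n-1}(1+\xi)^{-1-1/n}\,d\xi\cdot\frac1n .
\]
The integral behaves like $n$ plus a bounded quantity, since $\int_0^1\xi^{1/n-1}d\xi=n$ and the correction $\int_0^1\xi^{1/n-1}\bigl[(1+\xi)^{-1-1/n}-1\bigr]d\xi$ stays bounded. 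So this term tends to $8a$.

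For $m\ge1$, the factor $8a/n$ in front combined with the bound \eqref{Eq30}, $\int\le 1/u_m\le n/(2(n-1)m)$, shows each term is $O\bigl(n^{-1}m^{-5/2}\bigr)$ uniformly. By dominated convergence for series, the total contribution of $m\ge1$ therefore vanishes, and $L_\square=8a$.

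\textbf{The cross ($n\to0^+$).} This case is harder. Here $u_m=1+2m(1-n)/n$, so $u_m\sim 2m/n$ for $m\ge1$, and the prefactor $8a/n$ blows up. For the $m=0$ term, $u_0=1$ and the term is
\[
\frac{8a}{n}\int_0^1(1+\xi)^{-1-1/n}\,d\xi=8a\bigl(1-2^{-1/n}\bigr)\to 8a .
\]
For $m\ge1$, I would rescale with $\xi=t^{\,n/(2m(1-n)+n)}$, or equivalently use Laplace's method near $\xi=0$, where $(1+\xi)^{-1/n}\approx e^{-\xi/n}$. The aim is to show
\[
\frac1n\int_0^1\xi^{u_m-1}(1+\xi)^{-1-1/n}\,d\xi=O\bigl(n^{\,\cdot}\,m^{-1}\bigr)
\]
with a power of $n$ that tends to zero. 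A Beta-integral comparison gives
\[
\int_0^1\xi^{u-1}e^{-\xi/n}\,d\xi\le n^{u}\,\Gamma(u),
\]
but this is tiny only when $u$ is small relative to something. An alternative is to use the maximum of the integrand near $\xi=1$, which is of size $2^{-1/n}$, together with the bound $\int\le 1/u_m\approx n/(2m)$. That only yields $O(m^{-1})$ after the $1/n$ factor, so it is not enough on its own.

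\textbf{Main obstacle.} Getting a uniform small bound for the $m\ge1$ terms as $n\to0$ is the step I expect to be hardest. I would try splitting the integral at $\xi=1/2$. On $[0,1/2]$ the bound $\xi^{u_m-1}\le 2^{-(u_m-1)}$ is exponentially small in $m/n$. On $[1/2,1]$ the factor $(1+\xi)^{-1/n}\le(3/2)^{-1/n}$ is exponentially small in $1/n$. Together with $|\binom{1/2}{m}|=O(m^{-3/2})$, this gives a summable majorant times $n^{-1}(3/2)^{-1/n}\to0$. Hence the $m\ge1$ terms contribute nothing in the limit and $L_+=8a$.
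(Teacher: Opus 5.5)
Your argument is correct and has the same skeleton as the paper's: direct evaluation at $n=1$, and for $n\to\infty$ and $n\to0^+$, isolate the $m=0$ term and show the $m\ge1$ tail vanishes. The way you control the limits is genuinely different, though.

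The paper evaluates the $m=0$ terms in closed form: $S_0=2^{-1/n}$ via \eqref{Eq41}, and $S_0/n=1-2^{-1/n}$ via Appendix~\ref{CC}. It then takes the tails term by term. For the square it identifies the limiting series \eqref{Eq43} and sums it in Appendix~\ref{BB}. For the cross it replaces $u_m+1$ by $u_m$ and applies \eqref{Eq41}, which is only a heuristic; the ``limit'' in \eqref{Eq50} still depends on $n$. Neither step comes with the majorant needed to interchange $\lim_n$ and $\sum_m$.

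You supply exactly that majorant. For $n>1$, \eqref{Eq30} with $u_m\ge 2(n-1)m/n$ bounds the whole tail by $\frac{4a}{n-1}\sum_{m\ge1}m^{-1}\bigl|\binom{1/2}{m}\bigr|=O(1/n)$. The same majorant is what legitimizes the termwise limit \eqref{Eq43}, so Appendix~\ref{BB} is not needed for the conclusion. For $0<n<1$, your split at $\xi=1/2$ gives, uniformly in $m\ge1$,
\[
\frac1n\int_0^1\xi^{u_m-1}(1+\xi)^{-1-1/n}\,d\xi\le\frac{1}{2n}\Bigl[4^{-m(1-n)/n}+\bigl(\tfrac32\bigr)^{-1-1/n}\Bigr]\le\frac{7}{3n}\bigl(\tfrac32\bigr)^{-1/n}.
\]
Since $\sum_{m\ge1}\bigl|\binom{1/2}{m}\bigr|=1$, the tail is $O\bigl(n^{-1}(3/2)^{-1/n}\bigr)$. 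The paper's route yields closed forms ($S_0=2^{-1/n}$ and the value of the limiting series). Yours yields an actual justification of the limits, with explicit rates.

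Two small points to tighten.

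\emph{Square, $m=0$ term.} You assert without proof that $\int_0^1\xi^{1/n-1}\bigl[(1+\xi)^{-1-1/n}-1\bigr]d\xi$ stays bounded. Either use $0\le 1-(1+\xi)^{-s}\le s\xi$ with $s=1+1/n$, which bounds it by $1$. Or note that $\xi^{1/n-1}(1+\xi)^{-1-1/n}=n\,\frac{d}{d\xi}\bigl(\frac{\xi}{1+\xi}\bigr)^{1/n}$, so the $m=0$ term is exactly $8a\,2^{-1/n}$, as in the paper.

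\emph{Cross, exploratory remarks.} These can be dropped. Laplace's method ``near $\xi=0$'' is misdirected: for $m\ge1$ the factor $\xi^{u_m-1}$ pushes the mass to $\xi=1$. Also, the idea you discarded works on its own. For $n\le1/3$ and $m\ge1$, the logarithmic derivative of the integrand is at least $(u_m-1)-(1+1/n)\ge0$ on $(0,1]$. So the integrand is increasing and bounded by its value $2^{-1-1/n}$ at $\xi=1$. This alone gives the uniform tail bound $\frac{8a}{n}2^{-1-1/n}\to0$, with no need to combine it with $1/u_m$.
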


\begin{proof}
It is of particular interest to evaluate Eq.~\eqref{Eq26} at those values of $n$ for which the supercircle reduces, or converges, to rectilinear configurations whose lengths are accessible through elementary geometric arguments. These cases, identified in Fig.~\ref{F1}, correspond to the limits $n\to 0^+$, $n\to\infty$, and the transition value $n=1$.
 
In the limit $n\to\infty$, the supercircle converges to the square 
of side $2a$. As $n$ decreases to $1$, the curve deforms 
continuously into a rhombus with diagonals of length $2a$. This value separates the two geometric behaviors 
of the family: for $n>1$ the curve is convex, whereas for 
$0<n<1$ it has concavity oriented toward the origin. Finally, when $n\to 0^+$, the curve contracts toward the coordinate axes and adopts a degenerate cross-shaped configuration, whose limiting path consists, by symmetry, of four segments of length $2a$.
 
The cases $n\to 0^+$, $n=1$, and $n\to\infty$ are the only members of the family composed exclusively of rectilinear segments. We now evaluate Eq.~\eqref{Eq26} for each of them.
 
\begin{itemize}
  \item \textbf{Square: $\mathbf{n\to\infty}$}.
 This case corresponds to the regime $n\geq 1$, for which $u_m$ is given by
Eq.~\eqref{Eq23}. For the term $m=0$ we have $u_0 = 1/n$, whereas, for all $m\geq 1$, $u_m \simeq 2m$ when $n$ is very large. Since the factor $1/u_0 = n$ diverges
as $n\to\infty$, we extract the term $m=0$ before taking
the limit, and rewrite Eq.~\eqref{Eq26} as
  \begin{equation}\label{Eq40}
      L = 8a\,\Bigg[\underbrace{{}_2F_1\!\left(1+\frac{1}{n},\,\frac{1}{n};\,
      \frac{1}{n}+1;\,-1\right)}_{S_0}
      +
      \frac{1}{n}
      \underbrace{\sum_{m=1}^\infty \frac{1}{u_m}\binom{1/2}{m}
      \,{}_2F_1\!\left(1+\frac{1}{n},\,u_m;\,u_m+1;\,-1\right)}_{S_1}
      \Bigg].
  \end{equation}
  \textbf{Limit of $\mathbf{S_0}$.}
  Since the first and third parameters of ${}_2F_1$ coincide in $S_0$,
  we apply the identity \cite{R-16}
  \begin{equation}\label{Eq41}
      {}_2F_1(v,\,u;\,v;\,\gamma) = (1-\gamma)^{-u},
  \end{equation}
  with $v=1+1/n$, $u=1/n$, and $\gamma=-1$, and obtain
  \begin{equation}\label{Eq42}
      S_0
      = {}_2F_1\!\left(1+\frac{1}{n},\,\frac{1}{n};\,
      1+\frac{1}{n};\,-1\right)
      = 2^{-1/n}.
  \end{equation}
  In the limit $n\to\infty$, we have $S_0\to 1$.
 
  \textbf{Limit of $\mathbf{S_1}$.}
 When $n\to\infty$, for each fixed $m\geq 1$, we have $u_m\to 2m$, so that
  \begin{equation}\label{Eq43}
      S_1 \;\xrightarrow{n\to\infty}\;
      \sum_{m=1}^\infty \frac{1}{2m}\binom{1/2}{m}
      \,{}_2F_1\!\left(1,\,2m;\,2m+1;\,-1\right).
  \end{equation}
  This series converges to the finite value $\ln 4 - \sqrt{2}\ln(1+\sqrt{2})$; see Appendix~\ref{BB}.
Therefore, by taking $n\to\infty$, $S_1/n\to 0$, and, consequently,
Eq.~\eqref{Eq40} yields
  \begin{equation}\label{Eq44}
      L_\square = 8a,
  \end{equation}
  a result consistent with the perimeter of the square of side $2a$.
 
  \item \textbf{Cross: $\mathbf{n\to 0^+.}$}
  This case corresponds to the regime $0 < n \leq 1$, for which $u_m$ is given by
Eq.~\eqref{Eq23}. For the term $m=0$ we have $u_0=1$, whereas for all $m\geq 1$ we have $u_m\simeq 2m/n$ when $n$ is positive and sufficiently close to zero. For the terms $m\geq 1$, note that
\begin{equation}\label{Eq45}
      \frac{1}{n}\cdot\frac{1}{u_m}
      \simeq\frac{1}{2m},
  \end{equation}
  does not diverge, since the factors of $n$ cancel. However, for $m=0$ we have 
  \begin{equation}\label{Eq46}
  \frac{1}{n}\cdot\frac{1}{u_0}
  =\frac{1}{n}\xrightarrow{n\to 0^+}\infty,
  \end{equation}
  so we extract the term $m=0$
  before taking the limit, and rewrite
  Eq.~\eqref{Eq26} as
  \begin{equation}\label{Eq47}
      L = 8a\,\Bigg[\frac{1}{n}
      \underbrace{{}_2F_1\!\left(1+\frac{1}{n},\,1;\,
      2;\,-1\right)}_{S_0}
      +
      \frac{1}{n}
      \underbrace{\sum_{m=1}^\infty \frac{1}{u_m}\binom{1/2}{m}
      \,{}_2F_1\!\left(1+\frac{1}{n},\,u_m;\,u_m+1;\,-1\right)}_{S_1}
      \Bigg].
  \end{equation}
 
  \textbf{Limit of $\mathbf{S_0}$}.
  Since $S_0$ reduces to the elementary form
  $n(1-2^{-1/n})$, see Appendix~\ref{CC}, we obtain
  \begin{equation}\label{Eq48}
      \frac{1}{n}\,S_0
      = \frac{1}{n}\,{}_2F_1\!\left(1+\frac{1}{n},\,1;\,2;\,-1\right)
      = 1-2^{-1/n},
  \end{equation}
  so that, by taking the limit $n\to 0^+$, we have $S_0/n\;\xrightarrow\; 1$.
  
  \textbf{Limit of $\mathbf{S_1}$}.
  When $n\to 0^+$, we have $u_m\simeq 2m/n\to\infty$,
  so that the second and third parameters of ${}_2F_1$
  are asymptotically equivalent: $u_m\simeq u_m+1\simeq 2m/n$.
  Applying the symmetry of the first two parameters of ${}_2F_1$ together with the identity~\eqref{Eq41}, we obtain
  \begin{equation}\label{Eq49}
      {}_2F_1\!\left(\frac{1}{n},\,\frac{2m}{n};\,
      \frac{2m}{n};\,-1\right) = 2^{-1/n}.
  \end{equation}
  Therefore, using the approximation~\eqref{Eq45}, 
  we have
  \begin{equation}\label{Eq50}
      \frac{1}{n}S_1
      \;\xrightarrow{n\to 0^+}\;
      2^{-1/n}\sum_{m=1}^\infty \frac{1}{2m}\binom{1/2}{m}
      = 2^{-1/n}\!\left(\sqrt{2}-1
        -\ln\!\left(\frac{1+\sqrt{2}}{2}\right)\right).
  \end{equation}
  Since $2^{-1/n}\to 0$ as $n\to 0^+$ and the sum is
  finite, we conclude that $S_1/n\to 0$,
  and Eq.~\eqref{Eq47} yields
  \begin{equation}\label{Eq51}
      L_+ = 8a,
  \end{equation}
  a result consistent with the perimeter of the limiting cross-shaped contour, composed of eight segments of length $a$.
 
  \item \textbf{Rhombus: $\mathbf{n=1}$}.
  For this parameter value, $u_m=1$ for all $m\geq 0$, so that
  Eq.~\eqref{Eq26} reduces to
  \begin{equation}\label{Eq52}
      L = 8a\sum_{m=0}^\infty \binom{1/2}{m}
          \,{}_2F_1\!\left(2,\,1;\,2;\,-1\right).
  \end{equation}
 Applying the identity~\eqref{Eq41} with $v=2$, $u=1$,
  and $\gamma=-1$, we obtain
  \begin{equation}\label{Eq53}
      {}_2F_1(2,1;2;-1) = (1+1)^{-1} = \frac{1}{2}.
  \end{equation}
  Substituting this result into Eq.~\eqref{Eq52}, we obtain
  \begin{equation}\label{Eq54}
      L = 4a\sum_{m=0}^\infty \binom{1/2}{m}.
  \end{equation}
  The sum corresponds to the generalized binomial series
evaluated at the point $x=1$:
  \begin{equation}\label{Eq55}
      \sum_{m=0}^\infty \binom{1/2}{m} = (1+1)^{1/2} = \sqrt{2},
  \end{equation}
  hence
  \begin{equation}\label{Eq56}
      L_\diamond = 4\sqrt{2}\,a,
  \end{equation}
  a result consistent with the perimeter of the rhombus with diagonals of length $2a$,
whose four sides have length $a\sqrt{2}$.
\end{itemize}
\end{proof}
 
The three cases analyzed show that $L(n)$ exhibits a common limiting behavior at the endpoints of the parameter domain: 
both as $n\to 0^+$ and as $n\to\infty$, the length 
converges to the value $8a$. In contrast, the case $n=1$ leads to 
$L(1)=4\sqrt{2}\,a$, a value that admits a direct 
geometric interpretation.

\begin{corollary}
The arc length of the supercircle attains its geometric lower bound at $n=1$, and this length is given by
\[
L(1)=4\sqrt{2}\,a.
\]
\end{corollary}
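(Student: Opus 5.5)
To prove the corollary we need two things: the value $L(1)=4\sqrt{2}\,a$, and the fact that $L(n)\geq 4\sqrt{2}\,a$ for every $n>0$. The value is already available from the rhombus case of the preceding proposition, Eqs.~\eqref{Eq52}--\eqref{Eq56}. So the work is the lower bound. We will not try to control the alternating series~\eqref{Eq26}. Instead we go back to the integral form~\eqref{Eq3} and argue geometrically.

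\textbf{Reduction to one octant.} By the symmetry lemma, $L=8\ell(n)$, where $\ell(n)$ is the length of the arc $\theta\in[0,\pi/4]$. The polar equation~\eqref{Eq2} gives the endpoints of this arc explicitly:
\[
P_0=r(0)(1,0)=(a,0),\qquad P_1=r(\pi/4)\Bigl(\tfrac{1}{\sqrt2},\tfrac{1}{\sqrt2}\Bigr)=2^{-1/n}a\,(1,1),
\]
using $r(\pi/4)=a\,(2\cdot 2^{-n/2})^{-1/n}=2^{1/2-1/n}a$. Since a curve is at least as long as the chord joining its endpoints,
\[
\ell(n)\geq |P_0P_1| = a\sqrt{(1-2^{-1/n})^2+2^{-2/n}}.
\]

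\textbf{Minimizing the chord.} Put $t=2^{-1/n}\in(0,1)$. The chord squared is $f(t)=(1-t)^2+t^2=2t^2-2t+1$. This is minimized at $t=1/2$, that is, at $n=1$, with minimum value $f(1/2)=1/2$. Hence $\ell(n)\geq a/\sqrt2$ for all $n>0$, and so
\[
L(n)\geq 8\cdot\frac{a}{\sqrt2}=4\sqrt2\,a.
\]

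\textbf{Equality at $n=1$.} At $n=1$ the octant arc is exactly the segment $x+y=a$ from $(a,0)$ to $(a/2,a/2)$. Its length equals the chord, so the bound is attained. This agrees with the series computation in Eq.~\eqref{Eq56}.

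\textbf{Main obstacle.} The only care needed is the endpoint computation $r(\pi/4)=2^{1/2-1/n}a$ and the justification of the chord inequality. The chord inequality holds because $L$ in~\eqref{Eq3} is the length of a rectifiable piecewise-$C^1$ curve; for $n<1$ the arc has cusps only at the octant boundaries, so no issue arises. A purely analytic attempt through~\eqref{Eq16}--\eqref{Eq20} or the series would be much harder, so I would present the geometric argument.
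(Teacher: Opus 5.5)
Your argument is correct. It rests on the same principle as the paper's proof (an arc is at least as long as its chord), but it uses a different decomposition.

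\textbf{The paper's route.} The paper takes the quarter-arc in the first quadrant. Its endpoints $(a,0)$ and $(0,a)$ do not depend on $n$, so the chord $a\sqrt{2}$ gives $L(n)\ge 4\sqrt{2}\,a$ at once, with no optimization. Equality at $n=1$ holds because that quarter-arc is exactly the segment. The subsequent computation of $\partial L/\partial n$ at $n=1$ is only a supplementary check and is not needed for the bound.

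\textbf{Your route.} You take the octant arc, whose second endpoint $P_1=2^{-1/n}a(1,1)$ moves with $n$. That is why you need the extra minimization of $2t^2-2t+1$. This step is just the triangle inequality $2|P_0P_1|=|P_0P_1|+|P_1Q|\ge|P_0Q|=a\sqrt{2}$ with $Q=(0,a)$, so your bound dominates the paper's.

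\textbf{What the octant version buys.} You get an $n$-dependent estimate, $L(n)\ge 8a\sqrt{(1-2^{-1/n})^2+2^{-2/n}}$. It is strictly above $4\sqrt{2}\,a$ for $n\neq 1$, so $n=1$ is the unique minimizer without invoking the equality case of the chord inequality. It also tends to $8a$ as $n\to 0^+$ and as $n\to\infty$. This proves $\liminf L(n)\ge 8a$ at both ends of the family, which is half of the limiting behavior that the paper obtains by passing to the limit term by term inside the series.
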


\begin{proof}
In the first quadrant, all curves of the family connect 
the same points $(r,\theta)=(a,0)$ and 
$(r,\theta)=(a,\pi/2)$. Since the shortest distance between 
two points in the plane is the rectilinear segment joining them, the 
smallest possible length for the arc in the first quadrant is 
$a\sqrt{2}$. For $n=1$, this arc coincides precisely with 
that segment; by symmetry, the total length is four times 
that length, as obtained in Eq.~\eqref{Eq56}. 
Therefore, $L(1)=4\sqrt{2}\,a$ constitutes the geometric lower bound 
of the arc length within the family.
 
This result also admits an analytical confirmation: 
the derivative of $L(n)$ with respect to $n$ vanishes at $n=1$. By differentiating term by term and interchanging the derivative with the integral in Eq.~\eqref{Eq22}, with $u_m$ defined in Eq.~\eqref{Eq23}, for the regime $0<n\leq1$ we obtain
\begin{align}
   \left.\frac{\partial L}{\partial n}\right|_{n=1^-}
   &= 8a\sum_{m=0}^\infty \binom{1/2}{m}
   \int_0^1 \left.\frac{\partial}{\partial n}\left[\frac{1}{n}
   \xi^{u_m-1}(1+\xi)^{-(1+1/n)}\right]\right|_{n=1^-}\,d\xi\nonumber\\
   &= 8a\sum_{m=0}^\infty \binom{1/2}{m}
   \left(2\, m\ln 2-\frac{1}{2}\ln 2\right) = 0,\label{Eq57}
\end{align}
 whereas, for the regime $n\geq1$, 
\begin{align}
   \left.\frac{\partial L}{\partial n}\right|_{n=1^+}
   &= 8a\sum_{m=0}^\infty \binom{1/2}{m}
   \int_0^1 \left.\frac{\partial}{\partial n}\left[\frac{1}{n}
   \xi^{u_m-1}(1+\xi)^{-(1+1/n)}\right]\right|_{n=1^+}\,d\xi\nonumber\\
   &= 8a\sum_{m=0}^\infty \binom{1/2}{m}
   \left(-2\, m\ln 2+\frac{1}{2}\ln 2\right) = 0.\label{Eq58}
\end{align}
 
The vanishing of both derivatives confirms that $n=1$ is a stationary point of $L(n)$ and, together with the preceding geometric argument, allows us to identify it as a global minimum of the arc length in the family of supercircles.
\end{proof}

\subsection{The Circle and a Hypergeometric Representation of $\mathbf{\pi}$}

\begin{corollary}
By specializing the hypergeometric representation of the arc length to the circular case $n=2$, we obtain the following representation of the constant $\pi$:
\begin{equation}\label{Eq60}
    \pi = \sum_{m=0}^\infty \frac{4}{1+2m}\binom{1/2}{m}
    \,{}_2F_1\!\left(\frac{3}{2},\,\frac{1+2m}{2};\,
    \frac{3+2m}{2};\,-1\right).
\end{equation}
\end{corollary}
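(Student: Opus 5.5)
The plan is to specialize Theorem~\ref{T3.1} to $n=2$ and compare the result with the elementary perimeter of the circle. For $n=2$, Eq.~\eqref{Eq1} gives $r(\theta)=a(\cos^2\theta+\sin^2\theta)^{-1/2}=a$. The supercircle is therefore the circle of radius $a$, and its length is $L=2\pi a$. This value follows from Eq.~\eqref{Eq3} directly, since $dr/d\theta=0$ gives $L=8\int_0^{\pi/4}a\,d\theta=2\pi a$.

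Next I evaluate the right-hand side of Eq.~\eqref{Eq26} at $n=2$. Since $n\ge 1$, the second branch of Eq.~\eqref{Eq23} applies and gives
\[
u_m=\frac{1+2m(2-1)}{2}=\frac{1+2m}{2},\qquad u_m+1=\frac{3+2m}{2}.
\]
The first hypergeometric parameter becomes $1+1/n=3/2$. The prefactor is $8a/n=4a$, and $1/u_m=2/(1+2m)$. Substituting these values yields
\[
2\pi a = 4a\sum_{m=0}^\infty \frac{2}{1+2m}\binom{1/2}{m}\,{}_2F_1\!\left(\frac32,\frac{1+2m}{2};\frac{3+2m}{2};-1\right).
\]
Dividing by the diameter $2a$ (here $a>0$) gives exactly Eq.~\eqref{Eq60}.

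No step is genuinely hard, since the identity is a direct instance of Theorem~\ref{T3.1}. The only point needing care is the validity of the series at $n=2$, that is, the exchange of sum and integral in Eq.~\eqref{Eq22}. The series converges absolutely at $n\neq1$ by the convergence proposition, so the right-hand side is well defined.

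For a self-contained justification, I would use that $\lambda=2(n-1)/n=1$ at $n=2$. The terms of the binomial series $\sum_m\binom{1/2}{m}\xi^{m}$ are then dominated on $[0,1]$ by $|\binom{1/2}{m}|$, which is summable by Appendix~\ref{AA}. The weight $\xi^{-1/2}(1+\xi)^{-3/2}$ is integrable on $[0,1]$. Hence dominated convergence permits termwise integration, and Eq.~\eqref{Eq25} identifies each term as written.

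As an optional sanity check, the $m=0$ term is $4\,{}_2F_1(3/2,1/2;3/2;-1)=4\cdot2^{-1/2}$ by Eq.~\eqref{Eq41}, and the subsequent terms alternate in sign as noted in the remark on convergence.
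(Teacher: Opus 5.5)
Your proposal is correct and follows the paper's proof: you substitute $n=2$ into \eqref{Eq26} using the $n\geq 1$ branch of $u_m$, which gives $L_\circ = 8a\sum_m \frac{1}{1+2m}\binom{1/2}{m}\,{}_2F_1(\cdots)$, and then divide by the diameter $2a$. Your extra steps are sound and fill in details the paper leaves implicit: you compute $L=2\pi a$ directly from \eqref{Eq3} rather than appealing to the definition of $\pi$, and you justify termwise integration at $n=2$ via the summable bound $\sum_m\bigl|\binom{1/2}{m}\bigr|<\infty$ on $[0,1]$ together with the integrable weight $\xi^{-1/2}(1+\xi)^{-3/2}$.
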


\begin{proof}
Among the curved cases of the family, the circle ($n=2$) occupies a distinguished place because of its fundamental geometric role. Substituting $n=2$ into Eq.~\eqref{Eq26} for the regime $n\geq 1$, we obtain $u_m=(1+2m)/2$, so that
\begin{equation}\label{Eq59}
    L_{\circ} = 8a\sum_{m=0}^\infty \frac{1}{1+2m}\binom{1/2}{m}
    \,{}_2F_1\!\left(\frac{3}{2},\,\frac{1+2m}{2};\,
    \frac{3+2m}{2};\,-1\right).
\end{equation}
Since the diameter of the circle is $2a$, the ratio between its length and its diameter is given by $L_{\circ}/(2a)$. By definition, this ratio corresponds to $\pi$; therefore, Eq.~\eqref{Eq60} follows from Eq.~\eqref{Eq59}.
\end{proof}

\begin{remark}
Eq.~\eqref{Eq60} constitutes a hypergeometric representation of $\pi$ obtained as a particular case of the general formulation for the arc length. Its validity follows directly from the convergence of Eq.~\eqref{Eq26} for all $n>0$; therefore, the subsequent numerical study is aimed at examining its efficiency as an approximation formula.
\end{remark}

\section{Numerical Results}

\subsection{Truncation Criterion}

The formulation obtained in Eq.~\eqref{Eq26} expresses the
arc length of the supercircle by means of an infinite series of
hypergeometric functions. Therefore, its numerical evaluation requires approximating this series by successive truncations.
For fixed values of $n$ and $a$, we construct a sequence of
partial sums that progressively converges to the value of $L$.
If $L_{\text{previous}}$ and $L_{\text{current}}$ denote two
consecutive approximations, the approximate relative percentage
error is defined as
\begin{equation}\label{Eq61}
    \varepsilon_a =
    \frac{L_{\text{current}}-L_{\text{previous}}}{L_{\text{current}}}
    \times 100\%.
\end{equation}
The iterative process stops when the absolute value of this
error satisfies the condition
\begin{equation}\label{Eq62}
    |\varepsilon_a| < \varepsilon_s,
\end{equation}
where $\varepsilon_s$ is a prescribed percentage tolerance~\cite{R-17,R-18}.
Since the series~\eqref{Eq26} has alternating convergence,
the criterion~\eqref{Eq62} is interpreted as a measure of proximity between
successive iterations and, therefore, for each value of $n$, it determines the minimum truncation order $m^*$ required to reach the proximity prescribed by $\varepsilon_s$.
 
The truncation order is denoted by
\begin{equation}\label{Eq63}
    m^* = \min\bigl\{m \geq 1 :\,|\varepsilon_a|<\varepsilon_s\bigr\},
\end{equation}
where the condition $m \geq 1$ reflects that the stopping criterion
is activated from the second partial sum onward, once
$L_{\text{previous}}$ is defined.

\subsection{Global Behavior of $\mathbf{L(n)}$}

The iterative procedure produces a single value of $L$ for each fixed value of $n$; therefore, the study of the global behavior
of the arc length requires repeating the computation over
a suitable discretization of the domain of the parameter $n$.
For this purpose, we adopt a logarithmically uniform partition
of the interval
$[n_{\min}, n_{\max}]$, defined by
\begin{equation}\label{Eq64}
    n_i = 10^{\log_{10}(n_{\min})
    + \frac{i-1}{N-1}
    \left[\log_{10}(n_{\max})-\log_{10}(n_{\min})\right]},
    \qquad i=1,2,\ldots,N.
\end{equation}
This choice distributes the evaluation points uniformly on a logarithmic scale between the regimes $0<n<1$ and $n>1$, preventing the sampling from being dominated by the large scales of $n$. For each node $n_i$, we accumulate the terms of
Eq.~\eqref{Eq26} corresponding to $m=0,1,2,\ldots$,
until the criterion~\eqref{Eq62} is satisfied. Fig.~\ref{F2} a) shows the \texttt{Python~3.13}
code of the procedure described above, with the parameters $a=1$,
$\varepsilon_s=10^{-6}$\%, $N=1000$ nodes, and domain
$n\in[0.01,\,1000]$.

\begin{figure}[h]
\centering
\includegraphics[width=1.00\linewidth]{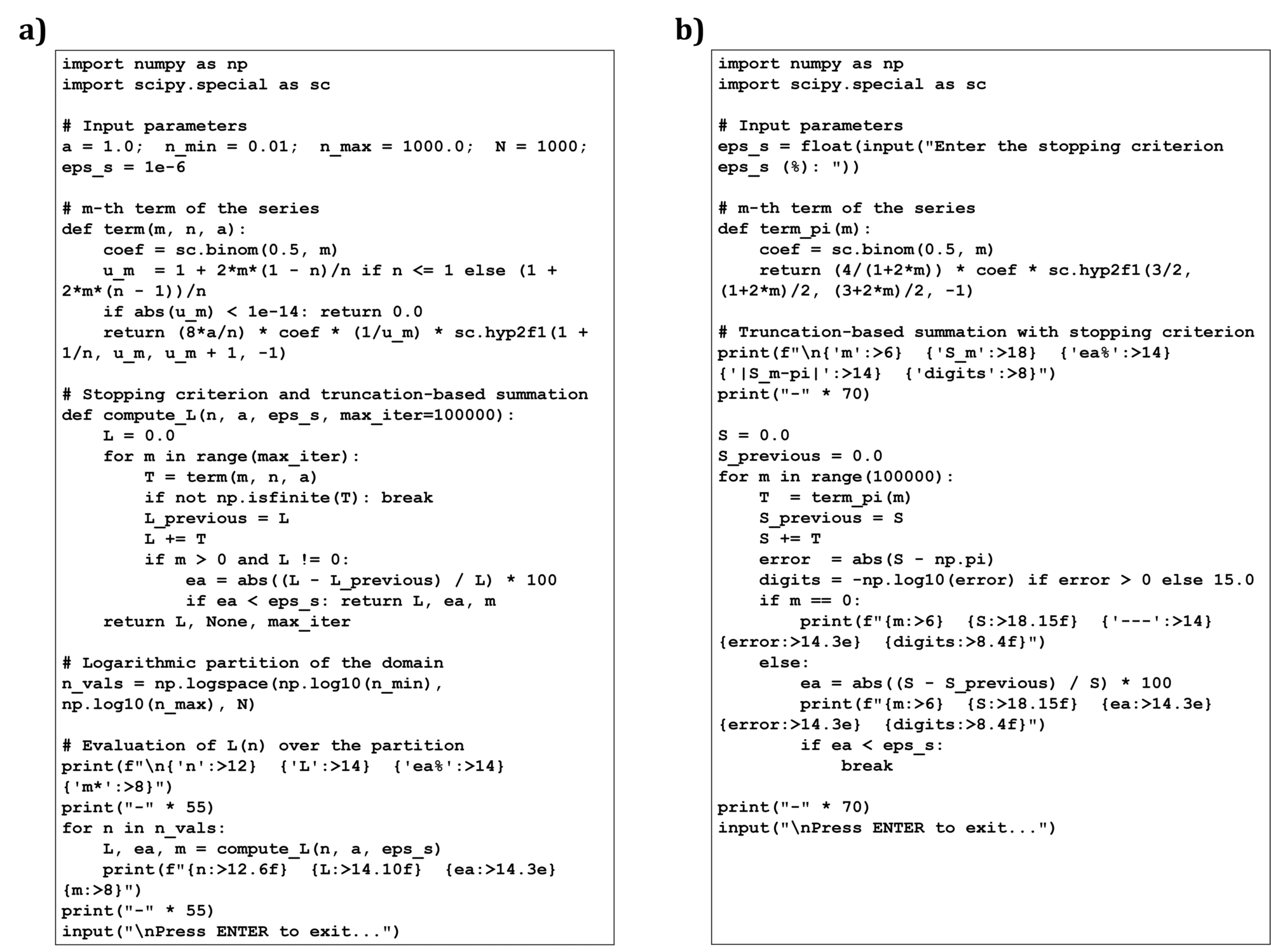}
\caption{Code in \texttt{Python~3.13}: a) iterative numerical 
calculation of the arc length $L(n)$ by truncating 
Eq.~\eqref{Eq26}; b) iterative calculation of the
hypergeometric representation of $\pi$,
Eq.~\eqref{Eq60}.}
\label{F2}
\end{figure}

Figs.~\ref{F3} and~\ref{F4} present the numerical results
for $L(n)$ and $m^*(n)$, respectively. The curve
$L(n)$ is consistent with the asymptotic behaviors
established in Eqs.~\eqref{Eq44} and~\eqref{Eq51},
as well as with the minimum in Eq.~\eqref{Eq56}: the
length converges to $L\to 8a$ at both endpoints of the family
and reaches its minimum value $L=4\sqrt{2}\,a$ at $n=1$, so that
for every $n>0$ we have
\begin{equation}\label{Eq65}
    4\sqrt{2}\,a \leq L(n) < 8a.
\end{equation}
 
\begin{figure}[h!]
\centering
\includegraphics[width=0.9\linewidth]{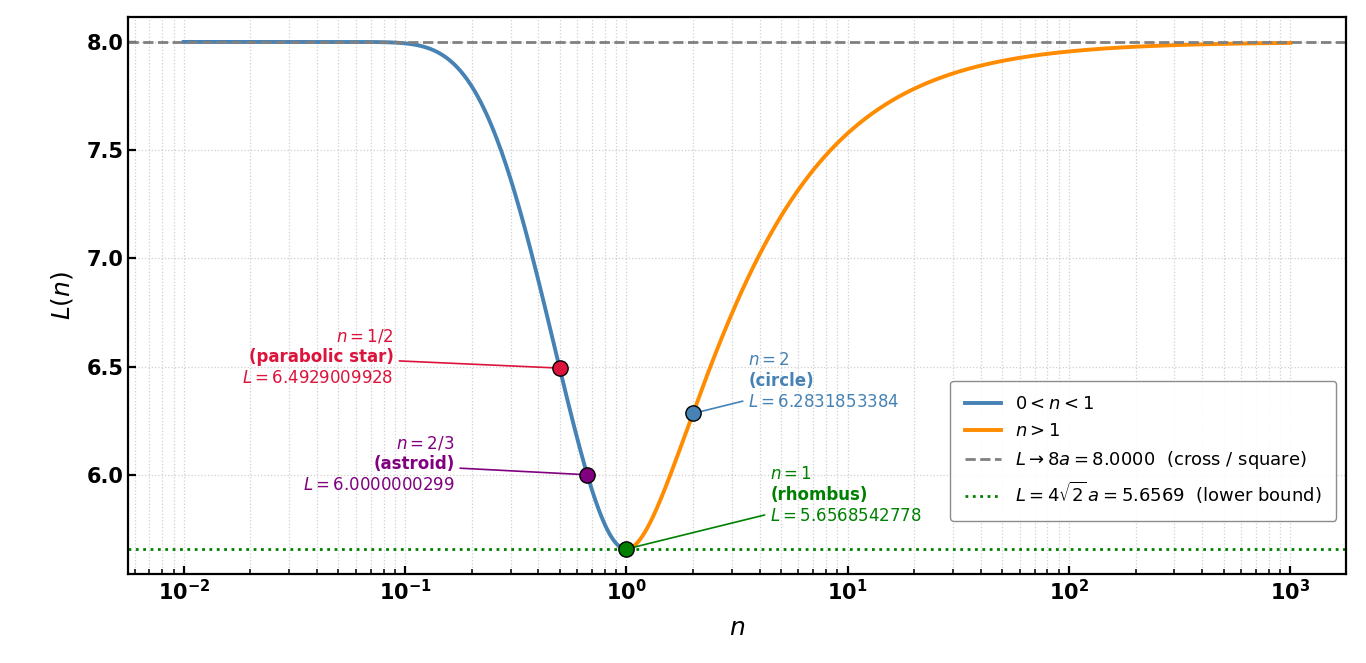}
\caption{Behavior of the arc length $L$ of the 
supercircle as a function of the parameter $n$ on a logarithmic scale.}
\label{F3}
\end{figure}

Fig.~\ref{F4} shows that the truncation order $m^*$
does not exhibit uniform behavior with respect to $n$. We
observe a pronounced increase around $n=1$, whose
maximum value over the logarithmic partition is $m^*=4110$,
reached at the node $n=1.0046$. This localized accumulation
indicates significantly slower convergence of
Eq.~\eqref{Eq26} near the transition value between the regimes $0<n\leq 1$ and $n\geq 1$.
The logarithmic partition considered does not include exactly the node
$n=1$; therefore, the plot does not reflect the corresponding value of
$m^*(n)$. Table~\ref{TI} shows that for the rhombus
($n=1$), we obtain $m^*=73551$, the maximum value of the truncation
order for $\varepsilon_s=10^{-6}$\%.

\begin{figure}[h!]
\centering
\includegraphics[width=0.8\linewidth]{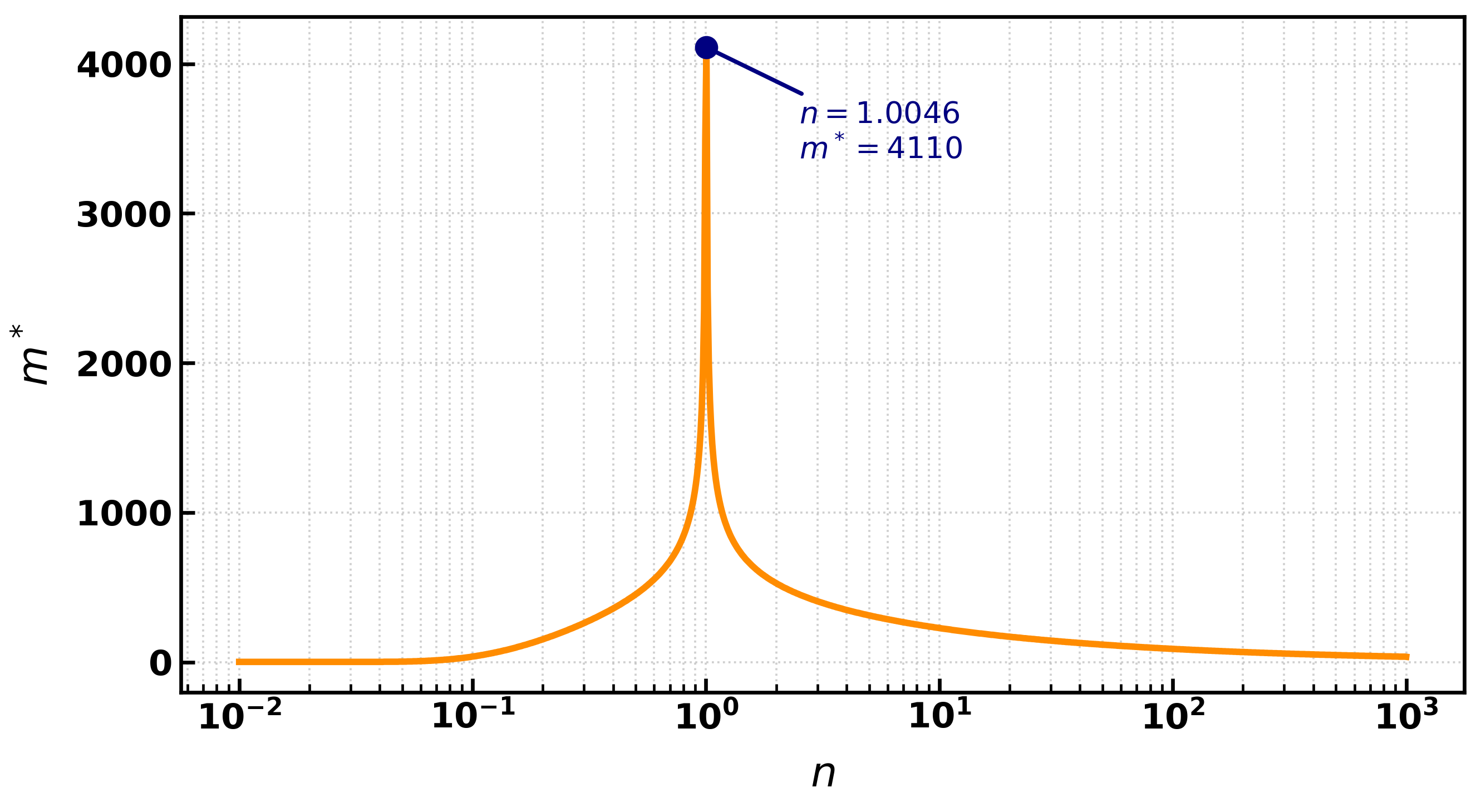}
\caption{Truncation order $m^*$ as a function of the parameter 
$n$ on a logarithmic scale, with $a=1$, $\varepsilon_s=10^{-6}$\%.}
\label{F4}
\end{figure}

\subsection{Validation on Known Analytical Cases}

The markers included in Fig.~\ref{F3} correspond to
pointwise evaluations of four classical supercircles
and are superimposed on the curve obtained from the logarithmic
partition: the parabolic star ($n=1/2$), the astroid
($n=2/3$), the rhombus ($n=1$), and the circle ($n=2$), illustrated
in Fig.~\ref{F1}. These geometries have exact and compact
analytical expressions for the arc length in
terms of the parameter $a$, which makes them natural
validation cases for Eq.~\eqref{Eq26}:
\begin{equation}\label{Eq66}
    L\!\left(\tfrac{1}{2}\right)
    = 4a\!\left[1+\frac{\sqrt{2}}{2}\operatorname{arcsinh}(1)\right],
    \quad
    L\!\left(\tfrac{2}{3}\right) = 6a,
    \quad
    L(1) = 4\sqrt{2}\,a,
    \quad
    L(2) = 2\pi a.
\end{equation}
Table~\ref{TI} compares the analytical values of $L$ with
those obtained by truncating
Eq.~\eqref{Eq26}. The true relative percentage error is defined as
\begin{equation}\label{Eq67}
    \varepsilon_{v\%} = \left|
    \frac{L_{\text{numerical}}-L_{\text{analytical}}}
    {L_{\text{analytical}}}
    \right|\times 100\%.
\end{equation}
 
\begin{table}[h!]
\centering
\caption{Comparison between the analytical and
numerical lengths for four particular supercircles, with $a=1$ and
$\varepsilon_s=10^{-6}$\%. The value of $\varepsilon_{a\%}$
corresponds to the last step satisfying the
stopping criterion~\eqref{Eq62}.}
\begin{tabular}{lcccccc}
\hline\hline
\textbf{Supercircle} & $n$ & $L_{\text{analytical}}$ &
$L_{\text{numerical}}$ & $\varepsilon_{a\%}$ &
$\varepsilon_{v\%}$ & $m^*$ \\
\hline
Parabolic star & $1/2$ & $6.4929009606$ & $6.4929009928$
& $9.96\times10^{-7}$ & $4.97\times10^{-7}$ & $453$ \\
Astroid            & $2/3$ & $6.0000000000$ & $6.0000000299$
& $9.98\times10^{-7}$ & $4.98\times10^{-7}$ & $631$ \\
Rhombus               & $1$   & $5.6568542495$ & $5.6568542778$
& $1.00\times10^{-6}$ & $5.00\times10^{-7}$ & $73551$ \\
Circle             & $2$   & $6.2831853072$ & $6.2831853384$
& $9.97\times10^{-7}$ & $4.97\times10^{-7}$ & $527$ \\
\hline\hline
\end{tabular}
\label{TI}
\end{table}
 
The results in Table~\ref{TI} confirm that
Eq.~\eqref{Eq26} reproduces the
reference analytical lengths with high precision: in all cases,
$\varepsilon_{v\%}$ remains of order $10^{-7}$\%.
The cases $n=1/2$, $n=2/3$, and $n=2$ require truncation orders of the order of a few hundred, whereas the case $n=1$, corresponding to the
rhombus, requires a truncation order of tens of thousands of terms.

Fig.~\ref{F4} shows that the truncation order $m^{*}$ increases sharply when $n$ approaches $1$. This behavior is accentuated in the pointwise evaluation of the rhombus, for which Table~\ref{TI} reports $m^{*}=73551$, the largest value obtained under the stopping criterion considered. This result is consistent with the convergence analysis of the series~\eqref{Eq26}: for $n=1$, we have $u_m=1$, and the general term is controlled by the decay of the generalized binomial coefficient, of order $O(m^{-3/2})$, whereas for $n\neq 1$ the bound $O(m^{-5/2})$ is obtained. Therefore, although the rhombus is the geometrically simplest case of the family and its length is obtained in an elementary way, its representation through the series~\eqref{Eq26} is numerically less efficient. Consequently, the value $n=1$, which minimizes the arc length within the family, also corresponds to the case that requires the largest truncation order in the numerical evaluation.

\subsection{Convergence to $\mathbf{\pi}$}

By specializing the circular case ($n=2$) in
Eq.~\eqref{Eq26} and considering the ratio $L_\circ/(2a)$,
we obtain the hypergeometric representation of
$\pi$ given in Eq.~\eqref{Eq60}. Its evaluation by
successive truncations allows us to study the convergence of
the partial sums $S_m$ and to quantify the precision achieved
as a function of the tolerance $\varepsilon_s$.
 
Fig.~\ref{F2} b) shows the code in \texttt{Python~3.13} corresponding to the
iterative procedure applied to Eq.~\eqref{Eq60}.
The algorithm uses the same stopping criterion
$|\varepsilon_a|<\varepsilon_s$ defined above and records, at each iteration, the partial sum $S_m$,
the absolute error $|S_m-\pi|$, and the number of correct digits
estimated from that error.
 
Fig.~\ref{F6} summarizes the convergence behavior.
In panel~(a), the first $16$ partial sums oscillate
around $\pi$, alternating above and below the
reference value with progressively smaller amplitudes.
This oscillation is due to the alternating character of the
binomial coefficients $\binom{1/2}{m}$ for $m\geq 1$.
In panel~(b), the absolute error $|S_m-\pi|$, represented
on a logarithmic scale, decreases as the number of partial sums considered increases.
 
\begin{figure}[h]
\centering
\includegraphics[width=0.95\linewidth]{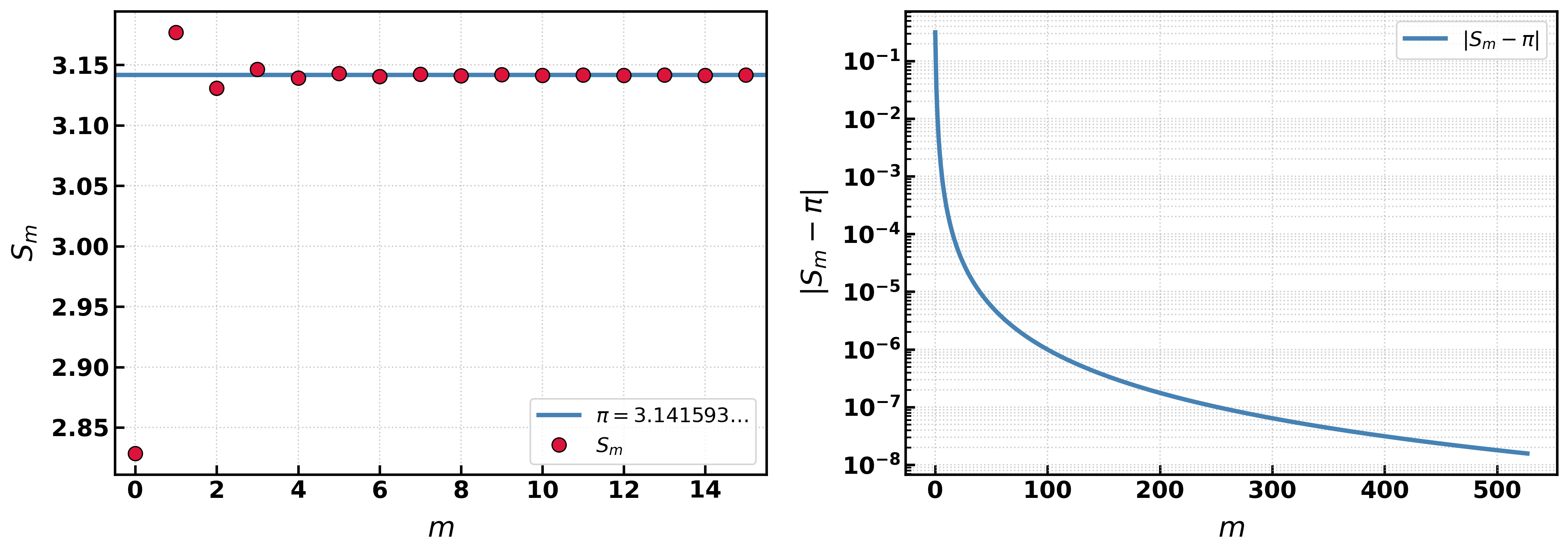}
\caption{Convergence to $\pi$:
(a) first $16$ partial sums $S_m$ and
(b) evolution of the error $|S_m-\pi|$ on a logarithmic scale,
with $\varepsilon_s=10^{-6}$\%.}
\label{F6}
\end{figure}
 
Table~\ref{T2} presents the approximations obtained for
a profile of percentage tolerances $\varepsilon_s=10^{s}$,
with $s=0,-1,-2,\ldots,-10$. The number of estimated correct digits is calculated as
\begin{equation}\label{Eq68}
     -\log_{10}|S_{m^*}-\pi| ,
\end{equation}
where $S_{m^*}$ is the partial sum that satisfies the stopping
criterion for the first time. In the range considered, each
decimal decrease of $\varepsilon_s$ produces approximately
one additional digit; nevertheless, this observation is empirical
and is not intended to characterize the asymptotic convergence rate
of the series.
 
\begin{table}[h!]
\centering
\caption{Approximations of $\pi$ by means of the
hypergeometric formulation~\eqref{Eq60} for a profile of percentage stopping tolerances $\varepsilon_s=10^{s}$, with
$s=0,-1,\ldots,-10$. The number of estimated correct digits is calculated using Eq.~\eqref{Eq68}.}
\begin{tabular}{cccc}
\hline\hline
\textbf{Tolerance} & \textbf{Order} & \textbf{Partial sum} & \textbf{Estimated digits} \\
$\varepsilon_s$ & $m^*$ & $S_{m^*}$ & Eq.~\eqref{Eq68} \\
\hline
$10^{0}$   & $3$     & $3.146280449988269$ & $2.3$  \\
$10^{-1}$  & $6$     & $3.140605171464835$ & $3.0$  \\
$10^{-2}$  & $14$    & $3.141463420680137$ & $3.9$  \\
$10^{-3}$  & $34$    & $3.141578143007277$ & $4.8$  \\
$10^{-4}$  & $84$    & $3.141591123065441$ & $5.8$  \\
$10^{-5}$  & $210$   & $3.141592497994520$ & $6.8$  \\
$10^{-6}$  & $527$   & $3.141592669214308$ & $7.8$  \\
$10^{-7}$  & $1322$  & $3.141592652021008$ & $8.8$  \\
$10^{-8}$  & $3320$  & $3.141592653432782$ & $9.8$  \\
$10^{-9}$  & $8339$  & $3.141592653605508$ & $10.8$ \\
$10^{-10}$ & $20946$ & $3.141592653588247$ & $11.8$ \\
\hline\hline
\end{tabular}
\label{T2}
\end{table}
 
The results in Table~\ref{T2} show that the
representation~\eqref{Eq60} requires substantial growth
of $m^*$ to gain additional digits of $\pi$.
In particular, for $\varepsilon_s=10^{-6}$\% we obtain
$m^*=527$, a value consistent with that reported in
Table~\ref{TI} for the arc length of the circle, and for
$\varepsilon_s=10^{-10}$\% approximately $11$ correct digits of $\pi$ are reached with $m^*=20946$. Therefore, the series is not a competitive tool
for the efficient computation of digits of $\pi$. Its
main interest is analytical and geometric: the constant
$\pi$ emerges by specializing the
general result~\eqref{Eq26} to the circular case of the family,
corresponding to $n=2$.

\section{Conclusions}
We obtain an infinite-series representation for the arc length of a supercircle, expressed in terms of generalized binomial coefficients and Gauss hypergeometric functions. The resulting formulation depends on the scale parameter $a$ and the shape parameter $n$, and is organized into two regimes determined by the value of $n$. The series converges for all $n>0$, although its convergence rate is not uniform: in the transition regime $n=1$ the convergence is slower, whereas it improves progressively as $n$ moves away from unity.

We establish the consistency of the formulation by analyzing the limiting cases $n\to 0^+$ and $n\to\infty$, for which the length converges to the value $8a$, as well as the case $n=1$, where we recover the length $4\sqrt{2}\,a$ corresponding to the rhombus. This latter value constitutes the geometric lower bound of the family. The numerical verification against configurations with exact arc length ---the parabolic star, the astroid, the rhombus, and the circle--- shows that the series reproduces the analytical values with high precision.
As an additional result, specializing the general formula to the circular case $n=2$ leads to a hypergeometric representation of $\pi$. When the prescribed tolerance is reduced by one order of magnitude, approximately one additional correct digit is obtained in its numerical approximation. This trend indicates that reaching higher precision requires considering increasing values of the truncation order $m^*$. Therefore, although this representation is not proposed as a competitive formula for the efficient computation of digits of $\pi$, it has analytical and geometric interest, since the constant emerges naturally as a consequence of the circular case of the family of supercircles.

In future work, it would be natural to study the acceleration of the convergence of the series by means of procedures such as Aitken's $\Delta^2$ algorithm, as well as to explore parametrizations of the formulation obtained for families of constant area or constant length. Likewise, the approach developed here suggests possible extensions to more general superellipses and to applications in the geometric modeling of problems in science and engineering.

\appendix
\section*{\centering{Appendices}}
\numberwithin{equation}{section}

\section{Asymptotic Expansion of the Binomial Coefficient for $\mathbf{\alpha=1/2}$}\label{AA}

The generalized binomial expansion is defined as~\cite{R-19}
\begin{equation}\label{A.1}
    (1+x)^\alpha
    = \sum_{m=0}^{\infty} \binom{\alpha}{m}\, x^m
    = \sum_{m=0}^{\infty} \frac{(-1)^m(-\alpha)_m}{m!}\,x^m, \qquad |x| < 1, \quad \alpha \in \mathbb{R},
\end{equation}
where $(-\alpha)_m$ denotes the Pochhammer symbol. The series converges for $|x|<1$ and also at the endpoints $x=\pm 1$ when $\alpha>0$. In particular, for $\alpha=1/2$, the binomial coefficient takes the form
\begin{equation}\label{A.2}
    \binom{1/2}{m}
    = \frac{(-1)^m\left(-\tfrac{1}{2}\right)_m}{m!}.
\end{equation}
From the identities $(v)_m = \Gamma(v+m)/\Gamma(v)$, with $v=-1/2$, $m!=\Gamma(m+1)$ and $\Gamma(-1/2)=-2\sqrt{\pi}$, we obtain
\begin{equation}\label{A.3}
    \binom{1/2}{m} = (-1)^{m+1}\,
    \frac{\Gamma\!\left(m-\frac{1}{2}\right)}{2\sqrt{\pi}\,\Gamma(m+1)}.
\end{equation}
Therefore, by taking absolute values,
\begin{equation}\label{A.4}
    \left|\binom{1/2}{m}\right| = \frac{\Gamma\!\left(m-\frac{1}{2}\right)}{2\sqrt{\pi}\,\Gamma(m+1)}.
\end{equation}
By applying the asymptotic behavior, for sufficiently large $m$, of the ratio of Gamma functions~\cite{R-16,R-20},
\begin{equation}\label{A.5}
    \frac{\Gamma(m+v)}{\Gamma(m+w)} \sim m^{v-w},
\end{equation}
with $v=-1/2$ and $w=1$, we obtain
\begin{equation}\label{A.6}
    \left|\binom{1/2}{m}\right| \simeq \frac{1}{2\sqrt{\pi}}\,m^{-3/2},
\end{equation}
that is,
\begin{equation}\label{A.7}
    \left|\binom{1/2}{m}\right| \sim O\!\left(m^{-3/2}\right).
\end{equation}
Consequently, the absolute value of the generalized binomial coefficient for $\alpha=1/2$ is asymptotically of order $m^{-3/2}$.

\section{Evaluation of the Series $\mathbf{S_1}$}\label{BB}

We consider the series
\begin{equation}\label{B.1}
    S_1 = \sum_{m=1}^\infty \frac{1}{2m}\binom{1/2}{m}
    \,{}_2F_1\!\left(1,\,2m;\,2m+1;\,-1\right).
\end{equation}
To evaluate this sum, we first transform the
hypergeometric function that appears in equation~\eqref{B.1}.
Applying Euler's transformation
\begin{equation*}
    {}_2F_1(v,u;w;\gamma)
    = (1-\gamma)^{-v}
      {}_2F_1\!\left(v,w-u;w;\frac{\gamma}{\gamma-1}\right),
\end{equation*}
with $v=1$, $u=2m$, $w=2m+1$ and $\gamma=-1$, we obtain
\begin{equation}\label{B.2}
    {}_2F_1(1,2m;2m+1;-1)
    = \frac{1}{2}\,{}_2F_1\!\left(1,1;\,2m+1;\,\frac{1}{2}\right).
\end{equation}
Then, by means of Euler's integral representation,
equation~\eqref{Eq24}, with $v=1$, $u=1$, $w=2m+1$
and $\gamma=1/2$, and using $\Gamma(2m+1)/\Gamma(2m)=2m$,
we obtain
\begin{equation}\label{B.3}
    \frac{1}{2}\,{}_2F_1\!\left(1,1;\,2m+1;\,\frac{1}{2}\right)
    = m\int_0^1\frac{(1-\xi)^{2m-1}}{1-\xi/2}\,d\xi.
\end{equation}
The change of variable $t=1-\xi$ transforms the denominator
according to $1-\xi/2=(1+t)/2$, and the integration limits
again remain between $0$ and $1$. Therefore, equations~\eqref{B.2}
and~\eqref{B.3} give
\begin{equation}\label{B.4}
    {}_2F_1(1,2m;2m+1;-1) = 2m\int_0^1\frac{t^{2m-1}}{1+t}\,dt.
\end{equation}
Substituting equation~\eqref{B.4} into~\eqref{B.1},
the factor $1/(2m)$ cancels with $2m$, and we obtain
\begin{equation}\label{B.5}
    S_1 = \sum_{m=1}^\infty\binom{1/2}{m}
          \int_0^1\frac{t^{2m-1}}{1+t}\,dt.
\end{equation}
By interchanging the sum and the integral, equation~\eqref{B.5} is rewritten as
\begin{equation}\label{B.6}
    S_1 = \int_0^1\frac{1}{t(1+t)}
          \sum_{m=1}^\infty\binom{1/2}{m}t^{2m}\,dt.
\end{equation}
The inner sum is identified with the generalized binomial series
\begin{equation}\label{B.7}
(1+t^2)^{1/2}=\sum_{m=0}^\infty\binom{1/2}{m}t^{2m},
\end{equation}
which converges since $\alpha=1/2>-1$ and $t^2\leq 1$
for all $t\in[0,1]$. Separating the term $m=0$, we have
\begin{equation}\label{B.8}
    \sum_{m=1}^\infty\binom{1/2}{m}t^{2m} = \sqrt{1+t^2}-1.
\end{equation}
Substituting equation~\eqref{B.8} into~\eqref{B.6},
we obtain the integral representation
\begin{equation}\label{B.9}
    S_1 = \int_0^1\frac{\sqrt{1+t^2}-1}{t(1+t)}\,dt.
\end{equation}
This last integral converges, and its evaluation leads to
\begin{equation}\label{B.10}
   S_1 = 2\ln 2 - \sqrt{2}\,\ln(1+\sqrt{2}).
\end{equation}

\section{Elementary Reduction of a Particular Case of $\mathbf{{}_2F_1}$}\label{CC}

We consider the following particular value of the hypergeometric function:
\begin{equation}\label{C.1}
   {}_2F_1\!\left(1+\frac{1}{n},\,1;\,2;\,-1\right).
\end{equation}
By means of Euler's integral representation,
equation~\eqref{Eq24}, with $v=1+1/n$, $u=1$, $w=2$ and $\gamma=-1$,
and since $\Gamma(1)=\Gamma(2)=1$, we obtain
\begin{equation}\label{C.2}
    {}_2F_1\!\left(1+\frac{1}{n},1;\,2;\,-1\right)
    = \int_0^1(1+\xi)^{-(1+1/n)}\,d\xi.
\end{equation}
Since $1+\xi\geq 1>0$ for all $\xi\in[0,1]$, the integrand
is continuous and bounded on this interval; therefore, the definite integral
is evaluated directly by the power rule:
\begin{equation}\label{C.3}
    \int_0^1(1+\xi)^{-(1+1/n)}\,d\xi
= n\!\left(1-2^{-1/n}\right).
\end{equation}
Therefore,
\begin{equation}\label{C.4}
    {}_2F_1\!\left(1+\frac{1}{n},1;\,2;\,-1\right)
    = n\!\left(1-2^{-1/n}\right).
\end{equation}

\end{document}